\undefined \DeclareGraphicsRule{*}{eps}{*}{} \else
\newtheorem{theorem}{Theorem}[section]
\newtheorem{lemma}[theorem]{Lemma}
\newtheorem{remark}[theorem]{Remark}
\newtheorem{definition}[theorem]{Definition}
\newtheorem{problem}[theorem]{Problem}
\newcommand{\F}{{\mathbb F}}
\begin{document}

\title{{Structure of long idempotent-sum free sequences over finite cyclic semigroups}}
\author{
Guoqing Wang\\
\small{Department of Mathematics, Tianjin Polytechnic University, Tianjin, 300387, P. R. China}\\
\small{Email: gqwang1979@aliyun.com}
\\
}
\date{}
\maketitle

\begin{abstract}
Let $\mathcal{S}$ be a finite cyclic semigroup written additively. An element $e$ of $\mathcal{S}$ is said to be idempotent if $e+e=e$. A sequence $T$ over $\mathcal{S}$ is called {\sl idempotent-sum free} provided that no idempotent of $\mathcal{S}$ can be represented as a sum of one or more terms from $T$. We prove that an idempotent-sum free sequence over $\mathcal{S}$ of length over approximately a half of the size of $\mathcal{S}$ is well-structured. This result generalizes the Savchev-Chen Structure Theorem for zero-sum free sequences over finite cyclic groups.
\end{abstract}

\noindent{\small {\bf Key Words}: {\sl Idempotent-sum free sequences;  Zero-sum free sequences;  Zero-sum; Inverse zero-sum problems; Idempotents; Cyclic semigroups }}

\section {Introduction}

Let $G$ be a finite cyclic group of order $n$, and let $T$ be a sequence of terms from $G$.
We say that $T$ is zero-sum free, if no nonempty subsequence of $T$ sums
to zero (the identity element of $G$). An easy observation shows that $n-1$ is the maximal length of
a zero-sum free sequence and if $T$ is a zero-sum free sequence of length exactly
$n-1$, then $T$ consists of one element $g$ which is repeated $n-1$ times. Investigations of the structure of long zero-sum free
sequences were started in 1970s' by J.D. Bovey, P. Erd\H{o}s and I. Niven \cite{BoErNi}, who proved that any long zero-sum free sequence of length $\ell$ over the cyclic group of order $n$ must contain one term with repetitions at least $2\ell-n+1$. After that, the study of the structure of long zero-sum free sequences over cyclic groups
has attracted considerable attention (see \cite{GaoInteger, GaoGeroldingerCombintaorica, GaoLiYuanZhuang, GeroHamidoune, SavchevChen, Yuan}), among which for a cyclic group of order $n$, W. Gao \cite{GaoInteger} characterized the zero-sum free sequence of length roughly greater than $\frac{2n}{3}$,  S. Savchev and F. Chen \cite{SavchevChen}  proved that each zero-sum free sequence with length greater than $\frac{n}{2}$ must have a regular structure. The Savchev-Chen structural zero-sum theorem (see \cite{Grynkiewiczmono}, Chapter 11) found a variety of applications in problems of zero-sum theory.

Problems of this flavor are known as inverse zero-sum problems, which ask for an structural description of sequences without the desired prescribed properties. Often the study of inverse problems is complex. For example, even for the simplest noncyclic case--rank 2 groups (the direct sum of two finite cyclic groups), it is only recently, and with a massive amount combined efforts of W. Gao, A. Geroldinger, D. Grynkiewicz and W.A. Schmid in several multi-pages articles (see \cite{GaoGeroldingerPeriodic,GaoGeroldingerInteger,  GaoGeroldingerDavidIII, GaoGeroldingerSchmidInverse, SchmidII}), and of C. Reiher \cite{Reiher} in a final step, that the structure of zero-sum free sequences of the maximal length has been determined.
Besides of being of interest for its own right, the inverse zero-sum problems are particularly relevant to the theory of non-unique factorizations. For this connection and some general information
on factorization theory we refer to \cite{CziszterDoGerolding,GH,GRuzsa}.

Another motivation of this manuscript comes from the question (see [4, 15]) proposed by P. Erd\H{o}s to D.A. Burgess which can restated as follows:

``{\sl Let $\mathcal{S}$ be a finite semigroup of order $n$. Is there an idempotent-product free sequence of terms from $\mathcal{S}$ with length $n$? }"

A sequence $T$ over $\mathcal{S}$ is called {\sl idempotent-product free} provided that no idempotent of $\mathcal{S}$ can be represented as a product of one or more terms from $T$ in any order.  The Erd\H{o}s' question was answered partially by Burgess \cite{Burgess69} in the case when $\mathcal{S}$ is commutative or contains only one idempotent, and was completely affirmed by
D.W.H. Gillam, T.E. Hall and N.H. Williams \cite{Gillam72},  and was extended to infinite semigroups by the author \cite{wangidempotent}.
Note that if the semigroup $\mathcal{S}$ is commutative, we use {\sl idempotent-sum free} for {\sl idempotent-product free} since the operation is addition and the order of terms in additions does not matter.  In particular, if $\mathcal{S}$ is a finite abelian group,
the notion {\sl idempotent-sum free} reduces to be {\sl zero-sum free}, because the identity element is the unique idempotent in a group.

Recently  some zero-sum type problems were investigated in the setting of commutative semigroups (see \cite{Deng, wangDavenportII, wangAddtiveirreducible, wang-zhang-qu, wang-zhang-wang-qu} for example).
In this manuscript we show that an idempotent-sum free sequence over a finite cyclic semigroup of length over approximately a half of the size of the cyclic semigroup will yield a regular structure, which generalizes the Savchev-Chen Structure Theorem for zero-sum free sequences over finite cyclic groups.  Moreover, we investigate the Ramsey-type question to determine the least integer $\ell$ to ensure that an idempotent-sum free sequence of length at least $\ell$ will have this regular structure, meanwhile, we also extend an invariant proposed by S.T. Chapman, M. Freeze and W.W. Smith \cite{ChFrSmi,ChSmi} on minimal zero-sum sequences into finite cyclic semigroups.

\section{Notation and terminologies}

For integers $a,b\in \mathbb{Z}$, we set $[a,b]=\{x\in \mathbb{Z}: a\leq x\leq b\}$.
For a real number $x$, we denote by $\lfloor x\rfloor$ the largest integer that is less
than or equal to $x$, and by $\lceil x\rceil$ the smallest integer that is greater than or equal to $x$.

Let $\mathcal{S}$ be a commutative semigroup written additively, where the operation is denoted as $+$.
For any positive integer $m$ and any element $a\in \mathcal{S}$, we denote by $ma$ the sum $\underbrace{a+\cdots+a}\limits_{m}$. An element $e$ of $\mathcal{S}$ is said to be idempotent if $e+ e=e$. A cyclic semigroup is a semigroup generated by a single element $s$, denoted $\langle s\rangle$, consisting all elements which can be represented as $m s$ for some positive integer $m$.
If the cyclic semigroup $\langle s\rangle$ is infinite then $\langle s\rangle$ is isomorphic to the semigroup of $\mathbb{N}$ with addition (see \cite{Grillet monograph}, Proposition 5.8), and if $\langle s\rangle$ is finite
then the least integer $k>0$ such that $ks=ts$ for some positive integer $t\neq k$ is called the {\bf index} of $\langle s\rangle$,  then the least integer $n>0$ such that $(k+n)s=k s$ is called the {\bf period} of $\langle s\rangle$. We denote a finite cyclic semigroup of index $k$ and period $n$ by $C_{k; n}$. In particular, if $k=1$ the semigroup $C_{k; n}$ reduces to a cyclic group of order $n$.
For any element $a$ of $C_{k; n}=\langle s\rangle$, let ${\rm ind}_{s}(a)$ (write as ${\rm ind}(a)$ for simplicity) be the least positive integer $t$ such that $t s=a$. Let $\mathbb{Z}\diagup n \mathbb{Z}$ be the additive group of integers modulo $n$.
Define a map $$\psi:C_{k; n}\rightarrow \mathbb{Z}\diagup n \mathbb{Z} \ \ \ \mbox{ given by } \ \
\psi: a\mapsto {\rm ind}(a)+ n\mathbb{Z} \ \ \ \ \mbox{ for all } a\in C_{k; n}.$$

We also need to introduce notation and terminologies on sequences over semigroups and follow the notation of A. Geroldinger, D.J. Grynkiewicz and
others used for sequences over groups (cf. [\cite{Grynkiewiczmono}, Chapter 10] or [\cite{GH}, Chapter 5]). Let ${\cal F}(\mathcal{S})$
be the free commutative monoid, multiplicatively written, with basis
$\mathcal{S}$. We denote multiplication in $\mathcal{F}(\mathcal{S})$ by the boldsymbol $\cdot$ and we use brackets for all exponentiation in $\mathcal{F}(\mathcal{S})$. By $T\in {\cal F}(\mathcal{S})$, we mean $T$ is a sequence of terms from $\mathcal{S}$ which is
unordered, repetition of terms allowed. Say
$T=a_1a_2\cdot\ldots\cdot a_{\ell}$ where $a_i\in \mathcal{S}$ for $i\in [1,\ell]$.
The sequence $T$ can be also denoted as $T=\mathop{\bullet}\limits_{a\in \mathcal{S}}a^{[{\rm v}_a(T)]},$ where ${\rm v}_a(T)$ is a nonnegative integer and
means that the element $a$ occurs ${\rm v}_a(T)$ times in the sequence $T$. By $|T|$ we denote the length of the sequence, i.e., $|T|=\sum\limits_{a\in \mathcal{S}}{\rm v}_a(T)=\ell.$ By $\varepsilon$ we denote the
{\sl empty sequence} over $\mathcal{S}$ with $|\varepsilon|=0$. We call $T'$
a subsequence of $T$ if ${\rm v}_a(T')\leq {\rm v}_a(T)\ \ \mbox{for each element}\ \ a\in \mathcal{S},$ denoted by $T'\mid T,$ moreover, we write $T^{''}=T\cdot  T'^{[-1]}$ to mean the unique subsequence of $T$ with $T'\cdot T^{''}=T$.  We call $T'$ a {\sl proper} subsequence of $T$ provided that $T'\mid T$ and $T'\neq T$. In particular, the  empty sequence  $\varepsilon$ is a proper subsequence of every nonempty sequence. We say $T_1,\ldots,T_{m}$ are {\sl disjoint subsequences} of $T$ provided that $T_1\cdot \ldots\cdot T_{m}\mid T$.
Let $\sigma(T)=a_1+\cdots +a_{\ell}$ be the sum of all terms from $T$.
Let $\Sigma(T)$ be the set consisting of the elements of $\mathcal{S}$ that can be represented as a sum of one or more terms from $T$, i.e., $\Sigma(T)=\{\sigma(T'): T' \mbox{ is taken over all nonempty subsequences of }T\}$.
We call $T$ a {\bf zero-sum} sequence provided that $\mathcal{S}$ has an identity $0_{\mathcal{S}}$ and $\sigma(T)=0_{\mathcal{S}}$.
In particular,
if $\mathcal{S}$ has an identity $0_{\mathcal{S}}$,  we adopt the convention
that $\sigma(\varepsilon)=0_\mathcal{S}.$
We say  the sequence $T$ is
\begin{itemize}
     \item a {\bf zero-sum free sequence} if $T$ contains no nonempty zero-sum subsequence;
     \item a {\bf minimal zero-sum sequence} if $T$ is a nonempty zero-sum sequence and and $T$ contains no nonempty proper zero-sum subsequence;
     \item an {\bf idempotent-sum sequence} if $\sigma(T)$ is an idempotent;
     \item an {\bf idempotent-sum free sequence} if $T$ contains no nonempty idempotent-sum subsequence;
     \item a {\bf minimal idempotent-sum sequence} if $T$ is a nonempty idempotent-sum sequence and $T$ contains no nonempty proper idempotent-sum subsequence.
\end{itemize}
It is worth remarking that when the commutative semigroup $\mathcal{S}$ is an abelian group, the notion {\sl zero-sum free sequence} and {\sl idempotent-sum free sequence} make no difference, and the similar holds for the pair of terminologies {\sl minimal zero-sum sequence} and {\sl minimal idempotent-sum sequence}.

For a finite cyclic semigroup $C_{k; n}$, we extend $\psi$ to the map $$\Psi:\mathcal{F}(C_{k; n})\rightarrow \mathcal{F}(\mathbb{Z}\diagup n \mathbb{Z}) \ \ \ \mbox{ given by } \ \  \Psi: T\mapsto \mathop{\bullet}\limits_{a\mid T} \psi(a)\ \ \ \ \mbox{ for any sequence }T\in \mathcal{F}(C_{k; n}).$$

\section{Structure of long idempotent-sum free sequences}

To give the main theorem, we shall need some preliminaries.

\begin{lemma} (Folklore) \label{lemma n-1 or n zero-sum free}\ Let $G$ be a cyclic group of order $n\geq 2$. If $T\in \mathcal{F}(G)$ is either a zero-sum free sequence of length $n-1$ or a minimal zero-sum sequence of length $n$, then $T=g^{[n-1]}$ or resp. $T=g^{[n]}$ for some $g\in G$ with ${\rm ord}(g)=n$.
\end{lemma}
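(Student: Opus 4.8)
The plan is to treat the minimal zero-sum case by reduction to the zero-sum free case. If $T$ is a minimal zero-sum sequence of length $n$, I would delete an arbitrary term $a$ to form $T\cdot a^{[-1]}$ of length $n-1$; any nonempty zero-sum subsequence of $T\cdot a^{[-1]}$ would be a nonempty \emph{proper} zero-sum subsequence of $T$, contradicting minimality, so $T\cdot a^{[-1]}$ is zero-sum free of length $n-1$. Granting the zero-sum free conclusion, $T\cdot a^{[-1]}=g^{[n-1]}$ with $\mathrm{ord}(g)=n$, and then $\sigma(T)=0$ forces $a=-(n-1)g=g$ (since $ng=0$), whence $T=g^{[n]}$. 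Thus everything reduces to the zero-sum free statement.

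For a zero-sum free $T=a_1\cdot\ldots\cdot a_{n-1}$, fix any ordering and form the partial sums $s_0=0$ and $s_i=a_1+\cdots+a_i$ for $i\in[1,n-1]$. If $s_i=s_j$ for some $0\le j<i$, then $a_{j+1}+\cdots+a_i$ is a nonempty zero-sum subsequence; hence $s_0,\ldots,s_{n-1}$ are $n$ distinct elements and therefore enumerate all of $G$. In particular, for \emph{every} ordering of $T$ the assignment $i\mapsto s_i$ is a bijection from $[0,n-1]$ onto $G$ (this also records that no term equals $0$, as a zero term would give $s_i=s_{i-1}$).

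The crux is to deduce that all terms coincide, and I would do this by an adjacent-transposition argument. Swapping $a_i$ and $a_{i+1}$ (for $i\in[1,n-2]$) yields another ordering of the \emph{same} sequence $T$, hence another zero-sum free sequence whose partial sums again biject onto $G$; but this swap changes only the single partial sum at position $i$, replacing $s_{i-1}+a_i$ by $s_{i-1}+a_{i+1}$ while leaving every other $s_j$ fixed. Two bijections from $[0,n-1]$ onto $G$ that agree in all coordinates but one must in fact agree in that coordinate as well, so $s_{i-1}+a_i=s_{i-1}+a_{i+1}$, i.e.\ $a_i=a_{i+1}$. Running this over all admissible $i$ gives $T=g^{[n-1]}$ for a single $g\in G$. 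Finally, zero-sum freeness means $jg\ne 0$ for all $j\in[1,n-1]$, so $\mathrm{ord}(g)$ divides $n$ yet exceeds $n-1$, forcing $\mathrm{ord}(g)=n$. I expect the main obstacle to be spotting the transposition observation that pins down equality of the terms: the partial-sum count and the reduction step are routine, whereas recognizing that a one-coordinate discrepancy between two bijections onto $G$ is impossible is the decisive point.
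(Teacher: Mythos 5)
Your proposal is correct in all of its steps. Note, however, that the paper offers no proof of this lemma at all: it is labelled ``(Folklore)'' and invoked as a black box, so there is no internal argument to compare yours against; your write-up stands on its own as a complete proof. The reduction of the minimal zero-sum case to the zero-sum free case by deleting one term is sound (minimality forces $T\cdot a^{[-1]}$ to be zero-sum free, and $\sigma(T)=0$ then pins down $a=-(n-1)g=g$). The pigeonhole step---the $n$ partial sums $s_0,\dots,s_{n-1}$ of any ordering are pairwise distinct, hence biject onto $G$---is the standard opening move, and your decisive observation is also right: an adjacent transposition of $a_i$ and $a_{i+1}$ changes only the single partial sum at position $i$, and two bijections from $[0,n-1]$ onto $G$ that agree off one coordinate must agree there as well (each must send that coordinate to the unique element of $G$ missed by the common remaining values), so $a_i=a_{i+1}$, giving $T=g^{[n-1]}$; then ${\rm ord}(g)=n$ follows since $jg\neq 0$ for $j\in[1,n-1]$ while ${\rm ord}(g)\mid n$. (For $n=2$ the transposition step is vacuous and the claim is immediate, so no case is lost.) Your transposition trick is in fact one of the classical ways this folklore statement is established---it amounts to comparing the orderings in which two chosen terms are placed in either relative order---whereas other common routes go by induction on $n$ or via the inequality $|\Sigma(T)|\geq |T|$ for zero-sum free sequences; yours is as elementary and self-contained as any of them.
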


\begin{lemma}\label{Lemma decomposition} \ Let $G$ be a cyclic group of order $n\geq 2$, and let $T\in \mathcal{F}(G\setminus \{0_G\})$ be a sequence of length at least $n-1$. Let $U$ be one of the longest zero-sum subsequences of $T$. If $|T\cdot U^{[-1]}|=n-1$ then $T=g^{[|T|]}$ for some $g\in G$ with ${\rm ord}(g)=n$.
 \end{lemma}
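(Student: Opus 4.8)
The plan is to set $V = T \cdot U^{[-1]}$, the complement of $U$ in $T$, so that $|V| = n-1$ by hypothesis. The first step is to observe that $V$ must itself be zero-sum free: if some nonempty $W \mid V$ had $\sigma(W) = 0_G$, then $U \cdot W$ would be a zero-sum subsequence of $T$ (the sequences $U$ and $V$ are disjoint in $T$ since $U \cdot V = T$, and $\sigma(U \cdot W) = \sigma(U) + \sigma(W) = 0_G$) that is strictly longer than $U$, contradicting the maximality of $U$. Thus $V$ is a zero-sum free sequence of length $n-1$ over the cyclic group $G$ of order $n$, and Lemma \ref{lemma n-1 or n zero-sum free} forces $V = g^{[n-1]}$ for some $g \in G$ with ${\rm ord}(g) = n$.

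It then remains to prove that every term of $U$ is equal to $g$ as well, for then $T = U \cdot V = g^{[|U|]} \cdot g^{[n-1]} = g^{[|T|]}$. I would argue this by a replacement (swapping) trick exploiting the maximality of $U$ once more. Let $a$ be any term of $U$; since all terms of $T$ are nonzero and $g$ generates $G$, write $a = jg$ with $j \in [1, n-1]$. Suppose, for contradiction, that $a \neq g$, i.e. $j \geq 2$. Because ${\rm v}_g(V) = n-1 \geq j$, the sequence $U' = (U \cdot a^{[-1]}) \cdot g^{[j]}$, obtained by deleting the single term $a$ from $U$ and adjoining $j$ copies of $g$ drawn from $V$, is again a subsequence of $T$. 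It satisfies $\sigma(U') = \sigma(U) - a + jg = 0_G$, so $U'$ is zero-sum, yet $|U'| = |U| + (j-1) > |U|$, contradicting that $U$ is a longest zero-sum subsequence of $T$. Hence $j = 1$ and $a = g$, so $U = g^{[|U|]}$ and the conclusion follows.

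The only step requiring genuine care is the replacement trick: one must check that $U'$ really is a subsequence of $T$, which hinges on the fact that the $j$ copies of $g$ are taken from $V$ and are therefore disjoint from the terms of $U$, together with the count ${\rm v}_g(V) = n-1 \geq j$ guaranteeing that enough copies are available. I expect this bookkeeping to be the main (and only mild) obstacle; everything else is immediate from the folklore lemma and the maximality of $U$. Note finally the degenerate case $U = \varepsilon$, which is permitted when $T$ is zero-sum free: then $T = V = g^{[n-1]}$ directly and there is nothing further to prove.
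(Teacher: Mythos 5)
Your proof is correct, and it takes a genuinely different route from the paper's. The paper does not argue directly with the given longest zero-sum subsequence $U$: it instead decomposes $T$ into disjoint \emph{minimal} zero-sum subsequences $U_1,\ldots,U_\ell$ with $\ell$ maximal plus a zero-sum free remainder $U_0$, notes $|U_0|={\rm D}(G)-1=n-1$ so that $U_0=z^{[n-1]}$ by Lemma \ref{lemma n-1 or n zero-sum free}, and then, for each term $x$ of each block $U_\theta$, uses $|x\cdot U_0|={\rm D}(G)$ to extract a new minimal zero-sum subsequence $U_\theta'$ containing $x$, swaps it into the decomposition, and applies Lemma \ref{lemma n-1 or n zero-sum free} again to the new remainder $U_0'$ -- an argument requiring the Davenport constant and some careful bookkeeping over the cases $|U_\theta'|=n$ and $|U_\theta'|<n$. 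You bypass all of this: you use maximality of $|U|$ twice, first to see that the complement $V=T\cdot U^{[-1]}$ is zero-sum free (hence $V=g^{[n-1]}$ by the folklore lemma, the only place it is invoked), and then in the replacement step, trading a term $a=jg$ of $U$ with $j\geq 2$ for $j\leq n-1={\rm v}_g(V)$ copies of $g$ from $V$, which preserves the zero sum but strictly increases the length. The subsequence check you flag is indeed the only delicate point, and it goes through exactly as you say: for $x\neq g$ the multiplicities only decrease, and ${\rm v}_g(U')\leq {\rm v}_g(U)+n-1={\rm v}_g(T)$. Your handling of the degenerate case $U=\varepsilon$ (consistent with the paper's convention $\sigma(\varepsilon)=0_G$) is also correct. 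What your approach buys is brevity and elementarity -- no minimal zero-sum decompositions, no appeal to ${\rm D}(G)$ beyond the folklore lemma itself, and the argument applies verbatim to the arbitrary longest $U$ named in the statement rather than to a specially constructed one; the paper's heavier machinery yields, as a by-product, the (unused) fact that a longest zero-sum subsequence can be realized as a product of disjoint minimal ones, but for this lemma that is overhead rather than gain.
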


\begin{proof}  We take disjoint minimal zero-sum subsequences of $T$, say $U_1,\ldots, U_{\ell}$, with $\ell$ being {\bf maximal}. Let $U_0=T\cdot (U_1\cdot \ldots\cdot U_{\ell})^{[-1]}$. By the maximality of $\ell$, we have $U_0$ is zero-sum free. It follows that $|U_0|\leq {\rm D}(G)-1=n-1$, which implies that $U=U_1\cdot \ldots\cdot U_{\ell}$ is one of the longest zero-sum subsequence of $T$ and $|U_0|=n-1$. It follows from Lemma \ref{lemma n-1 or n zero-sum free} that $$U_0=z^{[n-1]}$$  for some $z\in G$ with ${\rm ord}(z)=n$.  Now it suffices to assume that $|T|\geq n$, i.e., $\ell>0$, and prove that
$\alpha=z$ for any term $\alpha \mid U$.

Take arbitrary $\theta\in [1,\ell]$ and take an arbitrary term $x$ of $U_{\theta}$. Since $|x\cdot U_0|=n={\rm D}(G)$ and $U_0$ is zero-sum free, it follows that
$x\cdot U_0$ contains a minimal zero-sum subsequence  $U_{\theta}^{'}$  with $x\mid U_{\theta}^{'}.$
Let $U_0^{'}=(U_{\theta}\cdot U_0) \cdot {U_{\theta}^{'}}^{[-1]}$. Observe that $U_1,\ldots, U_{\theta-1}, U_{\theta}^{'},U_{\theta+1},\ldots,U_{\ell}$ are disjoint minimal zero-sum subsequences of $T$, and that $U_0^{'}=T\cdot (U_1\cdot \ldots\cdot  U_{\theta-1}\cdot U_{\theta}^{'}\cdot U_{\theta+1}\cdot \ldots\cdot U_{\ell})^{[-1]}$.
By the maximality of $\ell$, we see that $U_0^{'}$ is also a zero-sum free sequence and
\begin{equation}\label{equation length of U0'}
|U_0^{'}|=n-1.
\end{equation}

Suppose $|U_{\theta}^{'}|=n$. Then $U_{\theta}^{'}=x\cdot U_0$. It follows from Lemma \ref{lemma n-1 or n zero-sum free} that $x=z.$

Suppose $|U_{\theta}^{'}|<n.$ Observe that $U_0^{'}=((x\cdot U_0)\cdot {U_{\theta}^{'}}^{[-1]})\cdot (U_{\theta}\cdot x^{[-1]})$  and
$(x\cdot U_0) \cdot {U_{\theta}^{'}}^{[-1]}$ is a nonempty subsequence of $U_0$. Since all terms of $T$ are nonzero, it follows that
\begin{equation}\label{equation |Utheta|>1}
|U_{\theta}|>1
\end{equation}
and $|U_{\theta}\cdot x^{[-1]}|>0$.
By \eqref{equation length of U0'} and Lemma \ref{lemma n-1 or n zero-sum free}, we derive that $y=z$ for every term  $y\mid U_{\theta}\cdot x^{[-1]}.$
By \eqref{equation |Utheta|>1}, we have that as $x$ takes every term of $U_{\theta}$, so does $y$.  By the arbitrariness of choosing $\theta$ from $[1,\ell]$ and the arbitrariness of choosing $x$ from $U_{\theta}$, we have the lemma proved.
\end{proof}

\begin{lemma}\label{Lemma cyclic semigroup} (\cite{Grillet monograph},  Chapter I) \ Let $\mathcal{S}=C_{k; n}$ be a finite cyclic semigroup generated by the element $s$. Then  $\mathcal{S}=\{s,\ldots,k s,(k+1)s,\ldots,(k+n-1)s\}$
with
$$\begin{array}{llll} & is+js=\left \{\begin{array}{llll}
               (i+j)s, & \mbox{ if } \  i+j \leq  k+n-1;\\
                ts, &  \mbox{ if }  \ i+j \geq k+n, \ \mbox{ where}  \  k\leq t\leq k+n-1 \ \mbox{ and } \ t\equiv i+j\pmod{n}. \\
              \end{array}
           \right. \\
\end{array}$$
Moreover, there exists a unique idempotent, say $\ell s$, in the cyclic semigroup $\langle s\rangle$, where $$\ell\in [k,k+n-1] \  \mbox{ and }\  \ell\equiv 0\pmod {n}.$$
\end{lemma}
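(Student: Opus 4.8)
The plan is to work directly with the minimality built into the definitions of the index $k$ and the period $n$, which forces the multiples of $s$ into a ``tail plus cycle'' shape, and then to read off both the multiplication table and the idempotent from that picture. First I would treat the non-periodic part. By the minimality of the index, for every $i<k$ there is no $t\neq i$ with $is=ts$; equivalently, each of $s,2s,\ldots,(k-1)s$ differs from every other multiple of $s$, so these $k-1$ elements are distinct and never recur.

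Next I would analyse the periodic part. By definition of the period, $(k+n)s=ks$, and adding $s$ repeatedly gives $(k+m+n)s=(k+m)s$ for all $m\geq 0$, so the tail $ks,(k+1)s,(k+2)s,\ldots$ is periodic with period dividing $n$. The crux is to show that $ks,(k+1)s,\ldots,(k+n-1)s$ are in fact pairwise distinct; this is the step I expect to be the main obstacle, since it is exactly where minimality of the period must be used in full. Suppose $(k+i)s=(k+j)s$ with $0\leq i<j\leq n-1$. Adding $(n-j)s$ to both sides (allowed, as $n-j\geq 1$) gives $(k+n-(j-i))s=(k+n)s=ks$, so $n-(j-i)$ is a positive period strictly smaller than $n$, a contradiction.

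Combining the two parts, the $k+n-1$ elements $s,2s,\ldots,(k+n-1)s$ are pairwise distinct, while every higher multiple collapses onto one of them by periodicity; this yields the asserted listing of $\mathcal{S}$ and the count $|\mathcal{S}|=k+n-1$. The multiplication rule then drops out: $is+js=(i+j)s$ is already listed when $i+j\leq k+n-1$, and when $i+j\geq k+n$ periodicity lets me subtract multiples of $n$ to land at the unique $t\in[k,k+n-1]$ with $t\equiv i+j\pmod n$. Uniqueness of $t$ is immediate because $[k,k+n-1]$ is a block of $n$ consecutive integers, hence contains exactly one representative of each residue class modulo $n$.

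Finally, for the idempotent I would test $ms+ms=ms$ element by element. If $m<k$, then $ms$ never recurs, so $2ms=ms$ would force $2m=m$, which is impossible; hence any idempotent has $m\in[k,k+n-1]$. For such $m$ we have $2m\geq k$, so periodicity turns $2ms=ms$ into the congruence $2m\equiv m\pmod n$, i.e.\ $m\equiv 0\pmod n$. Exactly one $m$ in the block $[k,k+n-1]$ meets this, which produces the unique idempotent $\ell s$ with $\ell\in[k,k+n-1]$ and $\ell\equiv 0\pmod n$, completing the plan.
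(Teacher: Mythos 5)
The paper offers no proof of this lemma to compare against: it is quoted from Grillet's monograph (Chapter I) as a known structural fact about finite cyclic semigroups. Your argument is a correct, self-contained derivation from the paper's definitions of index and period, and it supplies exactly what the citation leaves out. The three essential points are all handled properly: (a) minimality of the index $k$ shows that $is$ for $i<k$ never coincides with any other multiple of $s$, since such a coincidence would make $i$ a smaller witness for the defining property of the index; (b) minimality of the period $n$ gives pairwise distinctness of $ks,(k+1)s,\ldots,(k+n-1)s$ --- your device of adding $(n-j)s$ to a hypothetical collision $(k+i)s=(k+j)s$ so as to manufacture a period $n-(j-i)<n$ is precisely the right way to exploit that minimality; and (c) the reduction of any $ms$ with $m\geq k$ to the unique $t\in[k,k+n-1]$ with $t\equiv m\pmod{n}$, which simultaneously yields the addition table and, applied to the equation $2ms=ms$, the characterization $\ell\equiv 0\pmod{n}$ of the unique idempotent (the case $m<k$ being excluded because such $ms$ never recurs). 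One point you leave implicit: the well-definedness of the period, i.e., the existence of some $n>0$ with $(k+n)s=ks$, requires observing that in the defining relation $ks=ts$ of the index one necessarily has $t>k$ (a $t<k$ would again contradict minimality of $k$); since the paper's definition of the period already presupposes this, it is a pedantic remark rather than a gap in your proof of the lemma as stated.
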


By Lemma \ref{Lemma cyclic semigroup}, it is easy to derive the following.

\begin{lemma}\label{Lemma product condition containing idmepotent} \  Let $\mathcal{S}=C_{k; n}$, and let $W\in \mathcal{F}(\mathcal{S})$ be a nonempty sequence. Then $W$ is an idempotent-sum sequence if, and only if,
$\sum\limits_{a\mid W}{\rm ind}(a)\geq \left\lceil\frac{k}{n}\right\rceil n$ and $\sum\limits_{a\mid W}{\rm ind}(a)\equiv 0\pmod{n}$.
\end{lemma}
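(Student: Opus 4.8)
The plan is to compute $\sigma(W)$ explicitly in terms of $I := \sum_{a \mid W} {\rm ind}(a)$ and then compare the result against the description of the unique idempotent supplied by Lemma \ref{Lemma cyclic semigroup}.

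First I would show that $\sigma(W) = Is$, where the right-hand side is understood as reduced to standard form via the addition rule of Lemma \ref{Lemma cyclic semigroup}. Writing $W = a_1 \cdot \ldots \cdot a_m$ with $a_j = {\rm ind}(a_j)\,s$, an easy induction on $m$ using the identity $is + js = (i+j)s$ (reduced) gives $\sigma(W) = \bigl({\rm ind}(a_1) + \cdots + {\rm ind}(a_m)\bigr)s = Is$. Concretely, Lemma \ref{Lemma cyclic semigroup} tells us that if $I \leq k+n-1$ then ${\rm ind}(\sigma(W)) = I$, while if $I \geq k+n$ then ${\rm ind}(\sigma(W))$ is the unique $t \in [k, k+n-1]$ with $t \equiv I \pmod{n}$. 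In both cases ${\rm ind}(\sigma(W)) \equiv I \pmod{n}$, and whenever $I \geq k$ the value ${\rm ind}(\sigma(W))$ is forced to lie in $[k, k+n-1]$.

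Next I would invoke the second part of Lemma \ref{Lemma cyclic semigroup}: the unique idempotent of $\mathcal{S}$ is $\ell s$ with $\ell \in [k, k+n-1]$ and $\ell \equiv 0 \pmod{n}$; observe that $\ell = \lceil k/n\rceil n$ is the least multiple of $n$ that is at least $k$, and that it is the only multiple of $n$ in the interval $[k,k+n-1]$ since this interval has length $n$. Hence $\sigma(W)$ is idempotent if and only if ${\rm ind}(\sigma(W)) = \ell$, and I would split into cases according to the size of $I$. If $I < k$ then ${\rm ind}(\sigma(W)) = I < k \leq \ell$, so $\sigma(W)$ is never idempotent; this is precisely the case in which a small multiple of $n$ below $k$ produces an element of the non-periodic tail rather than the idempotent, and it explains why the size condition $I \geq \lceil k/n\rceil n$ cannot be dropped. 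If instead $I \geq k$, then ${\rm ind}(\sigma(W))$ is the unique element of $[k,k+n-1]$ congruent to $I$ modulo $n$, so ${\rm ind}(\sigma(W)) = \ell$ if and only if $I \equiv 0 \pmod{n}$.

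Combining the two cases, $\sigma(W)$ is idempotent if and only if $I \geq k$ and $I \equiv 0 \pmod{n}$. Finally I would note that, under the constraint $I \equiv 0 \pmod{n}$, the inequality $I \geq k$ is equivalent to $I \geq \lceil k/n\rceil n$: any multiple of $n$ that is at least $k$ is automatically at least $\lceil k/n\rceil n = \ell$, and the converse is immediate since $\ell \geq k$. This yields the claimed equivalence. The only step demanding genuine care is the boundary analysis distinguishing $I < k$ from $I \geq k$, since the congruence condition alone is insufficient; everything else is a direct unwinding of the addition rule and the idempotent description in Lemma \ref{Lemma cyclic semigroup}.
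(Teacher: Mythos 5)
Your proof is correct and is exactly the argument the paper intends: the paper states this lemma without a written proof, remarking only that it is easy to derive from Lemma \ref{Lemma cyclic semigroup}, and your derivation (computing $\sigma(W)=Is$ with $I=\sum_{a\mid W}{\rm ind}(a)$ via the homomorphism property, identifying the unique idempotent as $\lceil k/n\rceil n\cdot s$, and splitting into the cases $I<k$ and $I\geq k$) is precisely that routine unwinding, carried out in full. In particular your closing observation, that under $I\equiv 0\pmod n$ the bound $I\geq k$ is equivalent to $I\geq\lceil k/n\rceil n$, correctly accounts for the form of the threshold in the statement.
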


\begin{definition}\label{Definition behaving sequence} (\cite{GRuzsa}, Definition 5.1.3) \ Let $G$ be an abelian group. Let $T\in \mathcal{F}(G)$ with $T=(n_1 g)\cdot \ldots\cdot (n_{\ell} g)$, where $\ell=|T|\in \mathbb{N}$, $g\in G$, $1=n_1\leq \cdots \leq n_{\ell}$, $n=n_1+\cdots +n_{\ell}\leq {\rm ord}(g)$ and $\sum(T)=\{g,2g,\ldots,ng\}$.
If $n<{\rm ord}(g)$ we call $T$ smooth (zero-sum free smooth in full), and if $n={\rm ord}(g)$ we call $T$ zero-sum smooth.  In the case we say more precisely that $T$ is (zero-sum free) $g$-smooth and zero-sum $g$-smooth respectively.
\end{definition}

We remark that in Definition 5.1.3 of \cite{GRuzsa}, the notion `smooth' is used only for zero-sum free sequences. To describe both idempotent-sum free sequences and minimal idempotent-sum sequences in this paper, we extend the notion as above and define zero-sum smooth sequences. Note that for any nonempty sequence $T\in \mathcal{F}(\mathbb{Z})$, $T$ is $1$-smooth if and only if $\sum(T)=[1,\sum\limits_{a\mid T} a]$.

\begin{lemma} \label{Lemma behaving for integers} \ For $\ell\geq 1$, let $T=\mathop{\bullet}\limits_{i\in [1,\ell]} h_i\in \mathcal{F}(\mathbb{Z})$ where $h_i>0$ for each $i\in [1,\ell]$. Suppose that the sequence $T$ is not $1$-smooth. Then $\sum\limits_{i=1}^{\ell} h_i\geq 2\ell$. Moreover, the equality $\sum\limits_{i=1}^{\ell} h_i= 2\ell$ holds if and only if either $T=1^{[\ell-1]}\cdot (\ell+1)$ or $T=2^{[\ell]}.$
\end{lemma}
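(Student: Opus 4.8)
The plan is to reduce the statement to the classical description of which multisets of positive integers realize every value in $[1,\sum_i h_i]$ as a subsequence sum. First I would assume, without loss of generality, that the terms are indexed in nondecreasing order $h_1\leq h_2\leq\cdots\leq h_\ell$, since both $\sum_i h_i$ and the set $\Sigma(T)$ (and hence the property of being $1$-smooth, by the remark preceding the lemma) are independent of the order of the terms. Writing $P_j=\sum_{i=1}^{j}h_i$, I would then record the standard criterion: $T$ is $1$-smooth, i.e.\ $\Sigma(T)=[1,P_\ell]$, if and only if $h_1=1$ and $h_{j+1}\leq P_j+1$ for every $j\in[1,\ell-1]$. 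Both directions follow by a short induction on $j$: if $\Sigma(h_1\cdot\ldots\cdot h_j)=[1,P_j]$ and $h_{j+1}\leq P_j+1$, then adjoining $h_{j+1}$ fills the interval up to $P_{j+1}$, while a violation $h_{j+1}\geq P_j+2$ makes $P_j+1$ unrepresentable (no subsequence of the first $j$ terms reaches it, and any subsequence using a later term already exceeds it).

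Assuming $T$ is not $1$-smooth, I would split into two cases. If $h_1\geq2$, then every term is at least $2$ and $\sum_i h_i\geq 2\ell$ at once. Otherwise $h_1=1$, and the criterion yields an index $j\in[1,\ell-1]$ with $h_{j+1}\geq P_j+2$; here $P_j\geq j$ as a sum of $j$ positive integers, and each of the $\ell-j$ remaining terms is at least $h_{j+1}\geq P_j+2$, so
\[
\sum_{i=1}^{\ell}h_i\;\geq\;P_j+(\ell-j)(P_j+2)\;\geq\;j+(\ell-j)(j+2)\;=\;2\ell+j(\ell-j-1),
\]
the middle step using monotonicity in $P_j$ together with $P_j\geq j$. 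Since $1\leq j\leq\ell-1$ forces $j(\ell-j-1)\geq0$, this gives $\sum_i h_i\geq 2\ell$ in both cases.

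For the equality clause I would trace which inequalities are forced to be tight when $\sum_i h_i=2\ell$. In the case $h_1\geq2$ this requires every term to equal $2$, i.e.\ $T=2^{[\ell]}$. In the case $h_1=1$, tightness of the displayed chain forces $j(\ell-j-1)=0$, hence $j=\ell-1$ (as $j\geq1$), together with $P_j=j$, so $h_1=\cdots=h_{\ell-1}=1$, and the unique remaining term equal to $P_j+2=\ell+1$; this is precisely $T=1^{[\ell-1]}\cdot(\ell+1)$. I would close by verifying the converse, namely that $2^{[\ell]}$ and $1^{[\ell-1]}\cdot(\ell+1)$ both have total $2\ell$ and fail to be $1$-smooth, which is immediate because their subsequence-sum sets miss $1$ and $\ell$ respectively.

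The numerics are routine, so the only genuine content is the $1$-smoothness criterion invoked in the first paragraph. The step most prone to error is the bookkeeping of the equality case when $h_1=1$, where three separate conditions ($j=\ell-1$, $P_j=j$, and equality of all the large terms) must be enforced simultaneously; it is also worth a quick check that in the degenerate case $\ell=1$ both extremal sequences collapse to $T=2$, consistent with the criterion $h_1\geq2$.
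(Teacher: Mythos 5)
Your proposal is correct and takes essentially the same approach as the paper: where you sort the terms and locate the first index $j$ with $h_{j+1}\geq P_j+2$, the paper instead takes a $1$-smooth subsequence of maximal length $\lambda$ and notes every remaining term is at least $2+\sum_{i\leq\lambda}h_i$, after which both arguments run the identical estimate $P_j+(\ell-j)(P_j+2)\geq j+(\ell-j)(j+2)=2\ell+j(\ell-1-j)\geq 2\ell$ (with $\lambda$ in place of $j$) and the identical equality analysis, yielding either all terms equal to $2$ or $j=\ell-1$, $P_j=j$ and last term $\ell+1$. The remaining differences are cosmetic: your sorting-plus-prefix-criterion packaging versus the paper's maximal-smooth-subsequence argument, and the paper's nominal induction on $\ell$, whose hypothesis its own argument never actually invokes.
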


 \begin{proof}  By induction on $\ell$. If $\ell=1$, the conclusion is obvious. Hence, we assume that the conclusion holds true for all $\ell<m$ with $m\geq 2$. Consider the case of $\ell=m$.

 Suppose $h_i\geq 2$ for all $i\in [1,m]$. Then $\sum\limits_{i=1}^{m} h_i\geq 2m$ and equality holds if and only if $T=2^{[m]}.$ Hence, we may assume without loss of generality that $h_1=1$.

 Let $\lambda$ be the largest length of $1$-smooth subsequences of $T$. Notice that
 \begin{equation}\label{equation lambda in}
 1\leq \lambda\leq m-1.
 \end{equation}
 Say $\mathop{\bullet}\limits_{i\in [1,\lambda]} h_i$ is $1$-smooth. Since $\sum(\mathop{\bullet}\limits_{i\in [1,\lambda]} h_i)=[1,\sum\limits_{i\in [1,\lambda]} h_i]$, it follows from the maximality of $\lambda$ that $h_t\geq 2+\sum\limits_{i\in [1,\lambda]} h_i\geq 2+\lambda$ for all $t\in [\lambda+1, m]$.
 Combined with \eqref{equation lambda in}, we have $\sum\limits_{i=1}^{m} h_i=\sum\limits_{i=1}^{\lambda} h_i+\sum\limits_{t=\lambda+1}^{m} h_t\geq \lambda+(m-\lambda)*(2+\lambda)\geq 2m$, moveover, equality $\sum\limits_{i=1}^{m} h_i=2m$ holds if and only if $\lambda=m-1$ and $\sum\limits_{i=1}^{\lambda} h_i=\lambda=m-1$ and $h_m=2+\lambda=m+1$, equivalently, $T=1^{[m-1]}\cdot (m+1)$.
 \end{proof}

\begin{lemma}\label{prop in cyclic semigroup} \ Let $\mathcal{S}={\rm C}_{k;  n}$ with $k>n\geq 1$. Let $T\in \mathcal{F}(\mathcal{S})$ be a sequence of length at least $\frac{(\left\lceil\frac{k}{n}\right\rceil+1) n}{2}-1$.
Then $T$ is idempotent-sum free if, and only if, one of the following conditions holds:

\noindent (i) $\mathop{\bullet}\limits_{a\mid T} {\rm ind}(a)$ is a $1$-smooth sequence with $\sum\limits_{a\mid T} {\rm ind}(a)\leq \left\lceil\frac{k}{n}\right\rceil n-1$;

\noindent (ii) $\mathop{\bullet}\limits_{a\mid T} {\rm ind}(a)=2^{[\frac{(\left\lceil\frac{k}{n}\right\rceil+1) n}{2}-1]}$ with $n\geq 3$ and $\left\lceil\frac{k}{n}\right\rceil n\equiv 1\pmod 2$;

\noindent (iii)  $\mathop{\bullet}\limits_{a\mid T} {\rm ind}(a)=z\cdot 2^{[\lceil\frac{k}{2}\rceil-1]}$ with $n=2$, $z\geq 3$ and $z\equiv 1\pmod 2$;

\noindent (iv)  $\mathop{\bullet}\limits_{a\mid T} {\rm ind}(a)=1^{[\frac{k-3}{2}]}\cdot \frac{k+1}{2}$ with $n=1$ and $k\equiv 1\pmod 2$;

\noindent (v) $\mathop{\bullet}\limits_{a\mid T} {\rm ind}(a)=2^{[\frac{k-1}{2}]}$ with $n=1$ and $k\equiv 1\pmod 2$.
\end{lemma}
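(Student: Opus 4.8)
The plan is to pass from the semigroup to an integer sumset problem and then run a sumset analysis driven by Lemma~\ref{Lemma product condition containing idmepotent} and Lemma~\ref{Lemma behaving for integers}. Put $c=\lceil\frac{k}{n}\rceil$ and $N=cn$, and attach to $T$ the integer sequence $R=\mathop{\bullet}\limits_{a\mid T}{\rm ind}(a)\in\mathcal{F}(\mathbb{Z})$ of indices, all of whose terms are positive. By Lemma~\ref{Lemma product condition containing idmepotent} a nonempty $W\mid T$ is idempotent-sum exactly when its index-sum is a multiple of $n$ that is at least $N$; hence $T$ is idempotent-sum free if and only if $\Sigma(R)$ contains no multiple of $n$ in $[N,\infty)$. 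Throughout I would exploit that the set $\Sigma_0(R):=\Sigma(R)\cup\{0\}$ of all subsequence sums is symmetric about $\sigma(R)/2$, since a subsequence $W$ and its complement $R\cdot W^{[-1]}$ have sums adding to $\sigma(R)$. The `if' direction is then a direct computation of $\Sigma(R)$: in (i), (iv), (v) every subsequence sum is at most $N-1$; in (ii) the sums are $\{2,4,\dots,2|T|\}$ and the first even multiple of the odd number $N=cn$, namely $(c+1)n$, already exceeds $2|T|=(c+1)n-2$; and in (iii) the sums omitting the odd term $z$ stay below $N$ while those containing $z$ are odd and hence are never even multiples of $n=2$.

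For the `only if' direction I would split on whether $R$ is $1$-smooth in the sense of Definition~\ref{Definition behaving sequence}. If it is, then $\Sigma(R)=[1,\sigma(R)]$, so avoiding $N$ forces $\sigma(R)\le N-1$, which is exactly (i). Otherwise Lemma~\ref{Lemma behaving for integers} gives $\sigma(R)\ge 2|T|\ge(c+1)n-2$. I would dispose of $n=1$ first: here every integer is a multiple of $n$, so freeness means $\sigma(R)\le N-1=k-1$; combined with $2|T|\ge k-1$ this forces equality throughout, and the equality clause of Lemma~\ref{Lemma behaving for integers} yields precisely $R=1^{[|T|-1]}\cdot(|T|+1)$ or $R=2^{[|T|]}$ with $|T|=\frac{k-1}{2}$, i.e.\ (iv) or (v) (and $k$ odd).

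The substantive case is $n\ge 2$ with $R$ not $1$-smooth, and here I would extract three reductions from the symmetry of $\Sigma_0(R)$. Since $\sigma(R)\ge(c+1)n-2\ge N$, if $\sigma(R)\equiv0\pmod n$ then $\sigma(R)$ is itself a forbidden multiple in $\Sigma(R)$, a contradiction; hence $\sigma(R)\not\equiv0\pmod n$. Letting $s$ be the sum of a longest $1$-smooth subsequence of $R$, one has $[0,s]\subseteq\Sigma_0(R)$ and, by symmetry, $[\sigma(R)-s,\sigma(R)]\subseteq\Sigma_0(R)$; if $s\ge n-1$ this reflected interval contains at least $n$ integers, hence contains the largest multiple $Q$ of $n$ below $\sigma(R)$, and $Q\ge N$ is forbidden. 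So $s\le n-2$, the residue $r:=\sigma(R)\bmod n$ satisfies $r\ge s+1$ (as $r=\sigma(R)-Q\notin\Sigma_0(R)$ while $[0,s]\subseteq\Sigma_0(R)$), and every term outside the smooth core is $\ge s+2$. In particular the core is short and, since $|T|\ge\frac{(c+1)n}{2}-1$ is large, almost every term is forced to be small.

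From here the argument becomes a residue-and-budget count on the remaining terms, and this is the step I expect to be the main obstacle. The key quantitative fact is that the even partial sums must stay below the first forbidden even multiple of $n$: this caps the number of terms equal to $2$. Adding copies of a $2$ sweeps the covered window $p+[0,s]$ through consecutive residues; when $n$ is odd this sweep visits every class mod $n$, so any term $\ge3$ would eventually push a sum onto a multiple of $n$ above $N$, whence no term is $\ge3$, which together with $s\ge0$ forces $s=0$ and $R=2^{[|T|]}$ (the alternative $1^{[|T|-1]}\cdot(|T|+1)$ is excluded because its $|T|-1$ ones would give $s\ge|T|-1>n-2$). Matching $\{2,\dots,2|T|\}$ against the smallest even multiple of $n$ above $N$ then forces $n$ and $c$ both odd and $|T|=\frac{(c+1)n}{2}-1$, i.e.\ (ii). When $n=2$ a single odd term keeps all its translates in the odd class and so never meets the even forbidden multiples, whereas a second odd term or any even term $\ge4$ or more than $c-1$ twos would; since the two-budget is $\frac{N}{2}-1=\lceil\frac{k}{2}\rceil-1$ and one odd term raises the length by exactly one, the threshold $\frac{(c+1)\cdot2}{2}-1=\frac N2$ is met precisely, isolating $R=z\cdot2^{[\lceil k/2\rceil-1]}$ with $z\ge3$ odd, i.e.\ (iii). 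The same budget comparison excludes all even $n\ge4$, where one odd term only reaches length $\frac N2<\frac N2+\frac n2-1=\frac{(c+1)n}{2}-1$, leaving only the smooth case. The delicate part is making this residue bookkeeping fully rigorous — in particular controlling configurations with several terms $\ge3$ and verifying that the sweep always lands inside $[N,\sigma(R)]$ — and the tools I would lean on are the symmetry of $\Sigma_0(R)$ and the interval-covering furnished by the smooth core.
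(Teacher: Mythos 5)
Your reduction to integers, the ``if'' direction, the $1$-smooth case, the $n=1$ case, and your symmetry proof that every $1$-smooth subsequence of $R$ has sum at most $n-2$ are all sound; this last item is exactly the paper's Claim~A, which the paper proves instead by trimming a subsequence $V$ off the smooth core so that $T\cdot V^{[-1]}$ becomes a forbidden idempotent-sum subsequence, so up to this point you have a correct, mildly different derivation. The genuine gap is precisely where you flag it: the non-smooth case with $n\ge 2$. Your sweep argument needs a large stock of terms equal to $2$ before it can sweep anything --- ``adding copies of a $2$'' presupposes those copies exist --- but nothing in your proposal produces them. After Claim~A you only know that every term outside the smooth core is at least $s+2$; a priori the terms could all be $\ge 3$, or could be several distinct large values, or the core could be nonempty, and then there is no step-size-$2$ progression to run and no forbidden sum is exhibited. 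Capping the number of $2$'s is not the issue; the issue is forcing essentially every term to \emph{equal} $2$ (for $n\ge 3$), and for that you have no mechanism. Your closing sentence concedes that configurations with several terms $\ge 3$ are uncontrolled, so what you have is an incomplete sketch of the hardest part, not a proof of it.

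For contrast, the paper closes this hole with a structural rather than arithmetic device, and it is the ingredient you are missing. Since $|T|\ge n={\rm D}(\mathbb{Z}\diagup n\mathbb{Z})$, one can choose a subsequence $U$ of $T$, \emph{maximal in length}, such that $\Psi(U)$ is zero-sum (index-sum a positive multiple of $n$); the complement $W=T\cdot U^{[-1]}$ then has $\Psi(W)$ zero-sum free, so $|W|\le n-1$. Freeness bounds $\sum_{a\mid U}{\rm ind}(a)$ above by $\left(\left\lceil\frac{k}{n}\right\rceil-1\right)n$, while Claim~A together with Lemma~\ref{Lemma behaving for integers} bounds it below by $2|U|\ge 2\bigl(|T|-(n-1)\bigr)\ge\left(\left\lceil\frac{k}{n}\right\rceil-1\right)n$; the chain collapses to equalities, pinning $|W|=n-1$, $|T|=\frac{(\lceil\frac{k}{n}\rceil+1)n}{2}-1$, and, via the equality clause of Lemma~\ref{Lemma behaving for integers}, $\mathop{\bullet}\limits_{a\mid U}{\rm ind}(a)=1^{[|U|-1]}\cdot(|U|+1)$ or $2^{[|U|]}$. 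The decisive step is then Lemma~\ref{Lemma decomposition}: because the longest zero-sum subsequence of $\Psi(T)$ leaves exactly $n-1$ terms, \emph{all} terms of $\Psi(T)$ are equal to one generator. This kills the $1^{[|U|-1]}\cdot(|U|+1)$ alternative, forces ${\rm ind}(a)\equiv 2\pmod n$ for every term and $n$ odd, and a final exchange argument (swap a term of $W$ with a term of $U$ and rerun the maximality argument) upgrades the congruence to ${\rm ind}(a)=2$ exactly, yielding (ii); the $n=2$ case falls out of the same equalities. If you want to keep your sumset framework you must supply a substitute for Lemma~\ref{Lemma decomposition}; without one, cases (ii) and (iii) are asserted but not derived.
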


\begin{proof}  By Lemma \ref{Lemma product condition containing idmepotent}, it is easy to verify that if the sequence $T$ is given as any one of (i)-(v), then $T$ is idempotent-sum free. Hence, we need only to prove the necessity.

Suppose $\mathop{\bullet}\limits_{a\mid T} {\rm ind}(a)$ is $1$-smooth.
Then $\sum(\mathop{\bullet}\limits_{a\mid T} {\rm ind}(a))=[1,\sum\limits_{a\mid T} {\rm ind}(a)]$.
Since $T$ is idempotent-sum free, it follows that from Lemma \ref{Lemma product condition containing idmepotent} that $\left\lceil\frac{k}{n}\right\rceil n\notin \sum(\mathop{\bullet}\limits_{a\mid T} {\rm ind}(a))$, and so $\sum\limits_{a\mid T} {\rm ind}(a)\leq \left\lceil\frac{k}{n}\right\rceil n-1$. Then (i) holds. Hence, we may assume that $\mathop{\bullet}\limits_{a\mid T} {\rm ind}(a)$ is not $1$-smooth.

Suppose $n=1$. By Lemma \ref{Lemma product condition containing idmepotent} and Lemma \ref{Lemma behaving for integers}, we have that $k-1\geq \sum \limits_{a\mid T} {\rm ind}(a)\geq 2|\mathop{\bullet}\limits_{a\mid T} {\rm ind}(a)|=2|T|\geq k-1$, which implies that $\sum \limits_{a\mid T} {\rm ind}(a)=2|\mathop{\bullet}\limits_{a\mid T} {\rm ind}(a)|=k-1$ and that either (iv) or (v) holds. Hence, we may assume that
\begin{equation}\label{equation n geq 2}
n\geq 2.
\end{equation}

It follows that \begin{equation}\label{equation sum of ind in T geq}
\sum\limits_{a\mid T}  {\rm ind}(a)
\geq  2|\mathop{\bullet}\limits_{a\mid T} {\rm ind}(a)|
\geq 2(\frac{(\left\lceil\frac{k}{n}\right\rceil+1) n}{2}-1)
\geq \left\lceil\frac{k}{n}\right\rceil n.\\
\end{equation}

\noindent {\bf Claim A.} \ {\sl For any nonempty subsequence $L$ of $T$ such that $\sum\limits_{a\mid L} {\rm ind}(a)\geq n-1$, the sequence  $\mathop{\bullet}\limits_{a\mid L} {\rm ind}(a)$ is not  $1$-smooth.}

\noindent {\sl Proof of Claim A.} \ Suppose to the contrary that there exists a nonempty subsequence $L$ of $T$ such that $\sum\limits_{a\mid L} {\rm ind}(a)\geq n-1$ and $\mathop{\bullet}\limits_{a\mid L} {\rm ind}(a)$ is $1$-smooth.  By \eqref{equation sum of ind in T geq}, there exists a subsequence $V$ (perhaps is an empty sequence) of $L$ such that $\sum\limits_{a\mid T\cdot V^{[-1]}} {\rm ind}(a)\equiv 0\pmod n$ and $\sum\limits_{a\mid T\cdot V^{[-1]}} {\rm ind}(a)\geq \left\lceil\frac{k}{n}\right\rceil n$, and thus  by Lemma \ref{Lemma product condition containing idmepotent}, $T\cdot V^{[-1]}$ is a nonempty idempotent-sum subsequence of $T$, which contradicts with $T$ being idempotent-sum free. This proves Claim A.   \qed

By \eqref{equation n geq 2}, we have that $|\Psi(T)|=|T|\geq \frac{(\left\lceil\frac{k}{n}\right\rceil+1) n}{2}-1\geq \frac{3n}{2}-1\geq n={\rm D}(\mathbb{Z}\diagup n \mathbb{Z})$. Let $U$ be
a nonempty subsequence of $T$ such that $\Psi(U)$ is a zero-sum sequence over $\mathbb{Z}\diagup n \mathbb{Z}$, i.e., $\sum\limits_{a\mid U} {\rm ind}(a)$ is a positive multiple of $n$,
with $|U|$ being {\bf maximal}.  Let $W=T\cdot U^{[-1]}$. Then $\Psi(W)$ is either empty or zero-sum free and so
$|W|=|\Psi(W)|\leq {\rm D}(\mathbb{Z}\diagup n \mathbb{Z})-1=n-1$.
Combined with Claim A, Lemma \ref{Lemma product condition containing idmepotent} and Lemma \ref{Lemma behaving for integers}, we conclude that
$$\begin{array}{llll}
(\left\lceil\frac{k}{n}\right\rceil-1) n&\geq &
\sum\limits_{a\mid U} {\rm ind}(a)\\
&\geq &  2|U|=2(|T|-|W|)\\
&\geq & 2(|T|-(n-1)) \\
&\geq & 2(\frac{(\lceil\frac{k}{n}\rceil+1) n}{2}-1-(n-1))\\
&=&(\lceil\frac{k}{n}\rceil-1)n.
\end{array}$$
It follows that
\begin{equation}\label{euqation |T0|=n-1}
|W|=n-1,
\end{equation}
\begin{equation}\label{euqation|T|=}
|T|=\frac{(\lceil\frac{k}{n}\rceil+1) n}{2}-1,
\end{equation}
and
\begin{equation}\label{equation sum U=2|U|}
(\left\lceil\frac{k}{n}\right\rceil-1) n=\sum\limits_{a\mid U} {\rm ind}(a)=2|U|.
\end{equation}
Since $\sum\limits_{a\mid U} {\rm ind}(a)$ is a positive multiple of $n$, it follows from Claim A that $\mathop{\bullet}\limits_{a\mid U} {\rm ind}(a)$ is not $1$-smooth. By Lemma \ref{Lemma behaving for integers} and
\eqref{equation sum U=2|U|}, we have that
$$\mathop{\bullet}\limits_{a\mid U} {\rm ind}(a)=1^{[|U|-1]}\cdot (|U|+1) \mbox{ or } \mathop{\bullet}\limits_{a\mid U} {\rm ind}(a)=2^{[|U|]}.$$

Suppose $n=2$. By Claim A, we see that ${\rm ind}(a)>1$ for each term $a\mid T$. Then $\mathop{\bullet}\limits_{a\mid U} {\rm ind}(a)=2^{[|U|]}$. Combined with \eqref{euqation |T0|=n-1}, we have that ${\rm ind}(z)\geq 3$ and ${\rm ind}(z) \equiv 1\pmod 2$, where $z$ denotes the unique term of $W$. Combined with \eqref{euqation|T|=}, then Condition (iii) holds.

Hence, it remains to consider the case of $$n\geq 3.$$

Suppose $\mathop{\bullet}\limits_{a\mid U} {\rm ind}(a)=1^{[|U|-1]}\cdot (|U|+1)$. By Claim A, we see that $|U|-1<n-1$ and $|U|<n$. Since $2|U|=|U|-1+|U|+1=\sum\limits_{a\mid U} {\rm ind}(a)$ is a positive multiple of $n$, it follows that $|U|=\frac{n}{2}$. Then all terms of $\Psi(T)$ are nonzero because neither of the two terms 1 and $|U|+1=\frac{n}{2}+1$ of the sequence $\mathop{\bullet}\limits_{a\mid U} {\rm ind}(a)$ is congruent to $0$ modulo $n$. By \eqref{euqation |T0|=n-1} and by applying Lemma \ref{Lemma decomposition} with $\Psi(T)$, we conclude that all terms of $\Psi(T)$ are equal, which is a contradiction with $1\not\equiv \frac{n}{2}+1=|U|+1 \pmod n$. Hence,
\begin{equation}\label{equation sum of U}
\mathop{\bullet}\limits_{a\mid U} {\rm ind}(a)=2^{[|U|]}.
\end{equation}

It follows that all terms of $\Psi(T)$ are nonzero. By \eqref{euqation |T0|=n-1} and by applying Lemma \ref{Lemma decomposition} with $\Psi(T)$, we conclude that all terms of the sequence $\Psi(T)$ are equal to a generator of the group $\mathbb{Z}\diagup n \mathbb{Z}$, i.e.,   \begin{equation}\label{equation alpha=m in T}
{\rm ind}(a)\equiv 2\pmod n \ \ \mbox{ for each } a\mid W
\end{equation}
and \begin{equation}\label{equation n equiv 1 mod 2}
n\equiv 1\pmod 2.
\end{equation}
Take an arbitrary term $\alpha$ of $W$ and an arbitrary term $\beta$ of $U$, and set $U'=(U\cdot \beta^{[-1]})\cdot \alpha$. It follows from \eqref{equation sum of U} and \eqref{equation alpha=m in T} that $\Psi(U')$ is also a zero-sum sequence with $|U'|=|U|$. By replacing $U$ with $U'$ and by \eqref{equation sum of U}, we conclude that $\alpha=\beta=2$. By the arbitrariness of choosing $\alpha$, we have that $\mathop{\bullet}\limits_{a\mid T} {\rm ind}(a)=2^{[|T|]}.$ By \eqref{equation sum U=2|U|} and \eqref{equation n equiv 1 mod 2}, we have that $\left\lceil\frac{k}{n}\right\rceil n=(\left\lceil\frac{k}{n}\right\rceil-1) n+n=2|U|+n\equiv 1\pmod 2$. Combined with \eqref{euqation|T|=}, then Condition (ii) holds. This completes the proof of the theorem.
\end{proof}

\begin{lemma} \label{lemma case of k leq n} \ Let $\mathcal{S}={\rm C}_{k;  n}$ with $1\leq k\leq n$, and let $T\in \mathcal{F}(\mathcal{S})$ be a nonempty sequence. Then $T$ is an idempotent-sum free [resp. minimal idempotent-sum] sequence if and only if $\Psi(T)\in \mathcal{F}(\mathbb{Z}\diagup n \mathbb{Z})$ is a zero-sum free [resp. minimal zero-sum] sequence.
 \end{lemma}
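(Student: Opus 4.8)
The plan is to exploit that the hypothesis $1\le k\le n$ forces $\left\lceil\frac{k}{n}\right\rceil=1$, which makes the size constraint in Lemma \ref{Lemma product condition containing idmepotent} vacuous, so that $\Psi$ transports idempotent-sum subsequences of $T$ exactly onto zero-sum subsequences of $\Psi(T)$. First I would record the following translation principle: for every nonempty $W\in\mathcal{F}(\mathcal{S})$, the sequence $W$ is idempotent-sum if and only if $\Psi(W)$ is a zero-sum sequence over $\mathbb{Z}\diagup n\mathbb{Z}$. Indeed, $\sigma(\Psi(W))=\bigl(\sum_{a\mid W}{\rm ind}(a)\bigr)+n\mathbb{Z}$, so $\Psi(W)$ is zero-sum precisely when $\sum_{a\mid W}{\rm ind}(a)\equiv 0\pmod n$. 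Since $W$ is nonempty and every ${\rm ind}(a)\ge 1$, the integer $\sum_{a\mid W}{\rm ind}(a)$ is strictly positive, so this congruence already forces $\sum_{a\mid W}{\rm ind}(a)\ge n=\left\lceil\frac{k}{n}\right\rceil n$. By Lemma \ref{Lemma product condition containing idmepotent} this conjunction of conditions is exactly the statement that $W$ is idempotent-sum, proving the principle.

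Next I would invoke that $\Psi$ is length-preserving, i.e. $|\Psi(W)|=|W|$, and carries subsequences to subsequences in both directions: if $W\mid T$ then $\Psi(W)\mid\Psi(T)$, and conversely every subsequence $V\mid\Psi(T)$ arises as $V=\Psi(W)$ for some $W\mid T$ with $|W|=|V|$, because $\Psi$ is defined termwise on $T$. Combining this with the translation principle, the nonempty idempotent-sum subsequences of $T$ correspond exactly to the nonempty zero-sum subsequences of $\Psi(T)$. Moreover, if $W$ is a proper subsequence of $T$ then $|\Psi(W)|=|W|<|T|=|\Psi(T)|$, so $\Psi(W)$ is a proper subsequence of $\Psi(T)$; conversely a proper zero-sum subsequence $V$ of $\Psi(T)$ pulls back to a proper idempotent-sum subsequence $W$ of $T$. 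Thus properness is preserved in both directions, which lets me treat the ``free'' and ``minimal'' cases uniformly.

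With these two observations the equivalences follow formally. For the idempotent-sum free case: $T$ fails to be idempotent-sum free iff it has a nonempty idempotent-sum subsequence, iff $\Psi(T)$ has a nonempty zero-sum subsequence, iff $\Psi(T)$ fails to be zero-sum free. For the minimal case: $T$ is a minimal idempotent-sum sequence iff $T$ is nonempty and idempotent-sum and has no nonempty proper idempotent-sum subsequence; translating each clause through the principle above (using length preservation to keep properness) shows this is equivalent to $\Psi(T)$ being nonempty and zero-sum with no nonempty proper zero-sum subsequence, that is, to $\Psi(T)$ being a minimal zero-sum sequence.

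The only genuine content lies in the first paragraph, where the bound $k\le n$ is used to make the threshold $\left\lceil\frac{k}{n}\right\rceil n$ collapse to $n$ and hence become automatic; everything afterward is a formal transport along the length-preserving map $\Psi$. I expect the sole point needing a little care to be the surjectivity of $\Psi$ onto subsequences of $\Psi(T)$ together with the preservation of properness, since $\psi$ itself need not be injective on $\mathcal{S}$; but length preservation renders this routine.
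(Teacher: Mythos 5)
Your proposal is correct and takes essentially the same approach as the paper: both hinge on the observation that $1\le k\le n$ gives $\left\lceil\frac{k}{n}\right\rceil n=n$, so the size threshold in Lemma \ref{Lemma product condition containing idmepotent} is automatically met by any nonempty sequence whose index-sum is divisible by $n$, after which everything is formal transport along the length-preserving, termwise-defined map $\Psi$. The paper compresses this transport into ``follows \ldots immediately''; your second and third paragraphs simply make those routine details explicit.
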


 \begin{proof} \ Since $k\leq n$, we see that $\sum\limits_{a\mid W}  {\rm ind}(a)\equiv 0\pmod n$ implies $\sum\limits_{a\mid W}  {\rm ind}(a)\geq n=\lceil \frac{k}{n}\rceil n$ for any nonempty sequence $W\in \mathcal{F}(\mathcal{S})$. Then the lemma follows from Lemma \ref{Lemma product condition containing idmepotent} and the definition of the map $\Psi$ immediately.
\end{proof}

 \begin{lemma} \label{SachenChen} (\cite{GRuzsa}, Theorem 5.1.8) \  Let $G$ be a cyclic group of order $n\geq 3$. If $T\in \mathcal{F}(G)$ is zero-sum free of length at least $\lfloor\frac{n}{2}\rfloor+1$, then $T$ is g-smooth for some $g\in G$ with ${\rm ord}(g)=n$.
 \end{lemma}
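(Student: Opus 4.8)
The statement is the Savchev--Chen Structure Theorem, and the plan is to reduce it to a single inverse additive-combinatorial claim about the set of subsequence sums and then read off smoothness from Lemma~\ref{Lemma behaving for integers}. I would work inside $\mathbb{Z}\diagup n\mathbb{Z}$ and put $\Sigma_0(T)=\Sigma(T)\cup\{0_G\}$. I claim it suffices to show that $\Sigma_0(T)$ is an arithmetic progression having $0_G$ as an endpoint. Indeed, writing its common difference as $g$, the progression reads $\Sigma_0(T)=\{0_G,g,2g,\ldots,sg\}$ with $s=|\Sigma(T)|$, so $\Sigma(T)=\{g,2g,\ldots,sg\}$. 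Since $T$ is zero-sum free it is classical (see \cite{Grynkiewiczmono}) that $|\Sigma(T)|\ge|T|\ge\lfloor\frac n2\rfloor+1>\frac n2$, whereas a progression with difference of order $d$ that avoids $0_G$ has at most $d-1$ terms; as every proper divisor of $n$ is at most $\frac n2$, this forces ${\rm ord}(g)=n$, i.e. $g$ is a generator and $s<n$. Now each term of $T$ is itself a subsequence sum, hence equals $m_ig$ with $m_i\in[1,s]$; the largest subsequence sum is $\sigma(T)=sg$ and, as $s<n$, no wrap-around occurs, so $\sum_i m_i=s$ as integers; finally $g\in\Sigma(T)$ can only be realized by a single term (a sum of two positive coefficients is at least $2$), so $\min_i m_i=1$ and the integer sequence $\mathop{\bullet}\limits_i m_i$ has subset-sum set exactly $[1,s]$. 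By the remark following Definition~\ref{Definition behaving sequence} this integer sequence is $1$-smooth, and therefore $T$ is $g$-smooth.

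For the core reduction I would express $\Sigma_0(T)$ as the sumset $\bigoplus_{a\mid T}\{0_G,a,2a,\ldots,{\rm v}_a(T)\,a\}$, one arithmetic progression per distinct term, and analyze it by Kneser's addition theorem. The Bovey--Erd\H{o}s--Niven bound \cite{BoErNi} quoted in the introduction supplies a term $g_0$ of multiplicity at least $2|T|-n+1\ge 2$, so one of these factors is already a long progression that seeds the eventual structure. The heart of the matter is to show that the stabilizer $H$ of $\Sigma_0(T)$ is trivial: if $\Sigma_0(T)$ were $H$-periodic with $H\neq\{0_G\}$ then, because $0_G\in\Sigma_0(T)$, the whole coset $H$ would lie in $\Sigma_0(T)$, and combined with the length hypothesis this manufactures a nonempty zero-sum subsequence of $T$, contradicting zero-sum-freeness. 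With $H$ trivial, the equality/inverse case of Kneser's theorem --- organized exactly as in Savchev and Chen \cite{SavchevChen} --- pins $\Sigma_0(T)$ down to a single arithmetic progression, and the condition $0_G\notin\Sigma(T)$ places $0_G$ at one of its endpoints, which is precisely the input needed above.

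The step I expect to be the genuine obstacle is this last one: establishing triviality of the stabilizer $H$ and then rectifying $\Sigma_0(T)$ to an honest arithmetic progression with $0_G$ at an endpoint. This is the technical content of \cite{SavchevChen} (see also \cite{Grynkiewiczmono}, Chapter~11) and is not formally reducible to the boundary-length Lemmas~\ref{lemma n-1 or n zero-sum free} and~\ref{Lemma decomposition}, which only control sequences of length about $n-1$. I would also record that the threshold $\lfloor\frac n2\rfloor+1$ is sharp: it is exactly the hypothesis that makes $|\Sigma(T)|>\frac n2$, which in turn is what excludes every proper-subgroup difference in the paragraph above; for shorter zero-sum free sequences the conclusion genuinely fails, so the argument must be delicate precisely at this boundary.
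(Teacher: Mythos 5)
The paper offers no proof of this lemma to compare against: it is quoted, with attribution, as Theorem 5.1.8 of \cite{GRuzsa} --- that is, it \emph{is} the Savchev--Chen structure theorem, imported as a black box to feed Theorem \ref{Theorem behaving sequence in semigroup}. Judged as a standalone proof, your proposal has a genuine gap, and it is the one you yourself flag: the entire content of the theorem is concentrated in the step you do not carry out, namely that $\Sigma(T)\cup\{0_G\}$ is an arithmetic progression with $0_G$ as an endpoint and with difference a generator. Deferring that step to ``the technical content of \cite{SavchevChen}'' is circular --- once it is granted, the rest is bookkeeping, so nothing has been proved. Moreover, the tools you invoke would not deliver it as described. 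Writing $\Sigma(T)\cup\{0_G\}=\sum_{a}\{0_G,a,2a,\ldots,{\rm v}_a(T)\,a\}$ and applying Kneser's theorem gives a lower bound on the cardinality, but Kneser's theorem has no ``equality/inverse case'' that outputs arithmetic progressions in a cyclic group of composite order; the inverse theory at that level is Kemperman's critical-pair theory, which is substantially harder than the direct inductive proofs actually used in \cite{SavchevChen} and in \cite{GRuzsa}. Your stabilizer argument is also incomplete as stated: if $H\neq\{0_G\}$ stabilizes $\Sigma(T)\cup\{0_G\}$, then indeed $H\setminus\{0_G\}\subseteq\Sigma(T)$, but knowing that $h$ and $-h$ are both subsequence sums does not produce a zero-sum subsequence unless the two realizing subsequences can be chosen disjoint, and securing that disjointness is precisely where the work lies.

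There is also a smaller unproved step inside the reduction you do carry out. From $\Sigma(T)=\{g,2g,\ldots,sg\}$ you assert that $\sigma(T)=sg$ and that ``no wrap-around occurs,'' so that the integer coefficients $m_i\in[1,s]$ of the terms sum to exactly $s$. Neither assertion is automatic: $\mathbb{Z}\diagup n\mathbb{Z}$ carries no order, and since $s\geq|T|>\frac{n}{2}$ one has $2s>n$, so a partial sum of the $m_i$ can exceed $n$ and still land in $[1,s]$ modulo $n$; excluding this is essentially the claim $\sum_i m_i<n$, which is part of what smoothness asserts, not a free consequence of the progression structure. This piece is likely repairable using the length hypothesis, but as written it is asserted rather than proved. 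In short, your framing (reduce to an inverse sumset statement, then read off smoothness via Lemma \ref{Lemma behaving for integers}) is reasonable, but the theorem's actual difficulty is untouched, and the paper itself sidesteps it by citation rather than proof.
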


Now we are in a position to give the main theorem.

\begin{theorem}\label{Theorem behaving sequence in semigroup} \  For integers $k,n\geq 1$, let $T\in \mathcal{F}({\rm C}_{k;  n})$ be a sequence of length
\begin{equation}\label{equation length for T}
|T|\geq \left\{ \begin{array}{ll}
\left\lfloor\frac{(\left\lceil\frac{k}{n}\right\rceil+1) n}{2}\right\rfloor, & \textrm{if $k>n$;}\\
\\
\left\lfloor\frac{n}{2}\right\rfloor+1, & \textrm{otherwise.}\\
\end{array} \right.
\end{equation}
Then $T$ is idempotent-sum free if and only if one of the following two conditions holds:

(i) $\mathop{\bullet}\limits_{a\mid T} {\rm ind}(a)\in \mathcal{F}(\mathbb{Z})$ is $1$-smooth with $\sum\limits_{a\mid T} {\rm ind}(a)\leq \left\lceil\frac{k}{n}\right\rceil n-1$ in the case of $k>n$;

(ii) $\mathop{\bullet}\limits_{a\mid T} ({\rm ind}(a)+n\mathbb{Z}) \in \mathcal{F}(\mathbb{Z}\diagup n \mathbb{Z})$ is $g$-smooth for some $g\in  \mathbb{Z}\diagup n \mathbb{Z}$ with ${\rm ord}(g)=n$ in the case of $k\leq n$.
\end{theorem}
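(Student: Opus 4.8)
The plan is to split along the dichotomy $k\le n$ versus $k>n$ that already organizes the supporting lemmas, and in each regime reduce the statement to a result proved above. In both regimes the sufficiency direction is essentially already recorded---in Lemma \ref{prop in cyclic semigroup} for $k>n$, and in Definition \ref{Definition behaving sequence} together with Lemma \ref{lemma case of k leq n} for $k\le n$---so the substance lies in the necessity direction.

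For the regime $k\le n$ I would apply Lemma \ref{lemma case of k leq n}, which identifies idempotent-sum freeness of $T$ with zero-sum freeness of $\Psi(T)\in\mathcal{F}(\mathbb{Z}\diagup n\mathbb{Z})$, and note $|\Psi(T)|=|T|\ge\lfloor\frac{n}{2}\rfloor+1$. Sufficiency is immediate: a $g$-smooth sequence is zero-sum free by Definition \ref{Definition behaving sequence} (the strict bound $\sum n_i<{\rm ord}(g)$ keeps $0$ out of its sumset), so condition (ii) forces $T$ idempotent-sum free via Lemma \ref{lemma case of k leq n}. For necessity with $n\ge 3$, I would feed the zero-sum free sequence $\Psi(T)$ of length at least $\lfloor\frac{n}{2}\rfloor+1$ straight into the Savchev--Chen theorem (Lemma \ref{SachenChen}), obtaining that $\Psi(T)$ is $g$-smooth for some $g$ of order $n$, which is exactly condition (ii). The residual moduli $n\in\{1,2\}$ must be inspected by hand, but both are vacuous: the maximal length of a zero-sum free sequence over $\mathbb{Z}\diagup n\mathbb{Z}$ is $n-1\le 1<\lfloor\frac{n}{2}\rfloor+1$, so no $T$ meeting the length hypothesis is idempotent-sum free, and by the same strict inequality in Definition \ref{Definition behaving sequence} no such $T$ is $g$-smooth either, so the equivalence holds trivially.

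For the regime $k>n$ the statement is a corollary of Lemma \ref{prop in cyclic semigroup} after reconciling the two length thresholds. Since $\lfloor x\rfloor\ge x-1$ for every real $x$, the present hypothesis $|T|\ge\lfloor\frac{(\lceil k/n\rceil+1)n}{2}\rfloor$ implies $|T|\ge\frac{(\lceil k/n\rceil+1)n}{2}-1$, so Lemma \ref{prop in cyclic semigroup} applies and places $T$ in one of its cases (i)--(v), with sufficiency already contained there. It then remains only to verify that the strengthened threshold excludes cases (ii)--(v), leaving case (i), which is verbatim condition (i). Writing $m=\lceil k/n\rceil$, each exclusion is a one-line length comparison against $\lfloor\frac{(m+1)n}{2}\rfloor$: in case (ii) one has $mn$ odd, hence $m,n$ odd, hence $(m+1)n$ even and the prescribed length $\frac{(m+1)n}{2}-1$ is exactly one below $\frac{(m+1)n}{2}=\lfloor\frac{(m+1)n}{2}\rfloor$; in case (iii), where $n=2$, the length $\lceil\frac{k}{2}\rceil=m$ falls one below $m+1=\lfloor\frac{(m+1)\cdot 2}{2}\rfloor$; and in cases (iv) and (v), where $n=1$ and $k$ is odd, the length $\frac{k-1}{2}$ falls one below $\frac{k+1}{2}=\lfloor\frac{k+1}{2}\rfloor$. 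Thus only case (i) survives.

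The genuinely load-bearing, if routine, part is this floor-function and parity bookkeeping: I must check both that Lemma \ref{prop in cyclic semigroup} applies here and that each of its four surplus cases is eliminated by the extra half-unit of length, all while keeping the ceilings $\lceil k/n\rceil$, the floors, and the parities of $m$ and $n$ straight. I anticipate no deeper obstacle, since the structural content has been pushed entirely into Lemmas \ref{Lemma decomposition}, \ref{prop in cyclic semigroup}, \ref{lemma case of k leq n} and the Savchev--Chen theorem; the only care needed is confirming that the degenerate small moduli contribute nothing to the equivalence.
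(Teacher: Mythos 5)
Your proposal is correct and follows essentially the same route as the paper: for $k\le n$ it reduces via Lemma \ref{lemma case of k leq n} to the Savchev--Chen theorem (Lemma \ref{SachenChen}) after disposing of $n\in\{1,2\}$, and for $k>n$ it invokes Lemma \ref{prop in cyclic semigroup} and eliminates cases (ii)--(v) by the length comparison $\frac{(\lceil k/n\rceil+1)n}{2}-1<\bigl\lfloor\frac{(\lceil k/n\rceil+1)n}{2}\bigr\rfloor$. Your explicit case-by-case parity bookkeeping and the vacuous-equivalence treatment of small $n$ are just more detailed versions of what the paper asserts in one line each.
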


\begin{proof} The sufficiency of the theorem follows from Definition \ref{Definition behaving sequence} and Lemma \ref{Lemma product condition containing idmepotent}.

For $k>n$, the necessity follows from Lemma \ref{prop in cyclic semigroup} because the sequences meeting any one of Conditions (ii)-(v) have length exactly $\frac{(\left\lceil\frac{k}{n}\right\rceil+1) n}{2}-1<\left\lfloor\frac{(\left\lceil\frac{k}{n}\right\rceil+1) n}{2}\right\rfloor$.

Suppose $k\leq n$.
By Lemma \ref{lemma case of k leq n}, then $\Psi(T)=\mathop{\bullet}\limits_{a\mid T} ({\rm ind}(a)+n\mathbb{Z})\in \mathcal{F}(\mathbb{Z}\diagup n \mathbb{Z})$ is a zero-sum free sequence with length $|\Psi(T)|=|T|\geq \left\lfloor\frac{n}{2}\right\rfloor+1$. Note that $n\geq 3$, since otherwise $n\in \{1,2\}$ then $\left\lfloor\frac{n}{2}\right\rfloor+1=n={\rm D}(\mathbb{Z}\diagup n \mathbb{Z})$ which is a contradiction with $\Psi(T)$ being zero-sum free. Then the necessity follows from Lemma \ref{SachenChen} immediately.
 \end{proof}

\begin{remark} \ We remark that the values $t$ in \eqref{equation length for T} of Theorem \ref{Theorem behaving sequence in semigroup} are best possible in general to ensure an idempotent-sum free sequence $T$ over ${\rm C}_{k;  n}$ of length $|T|\geq t$ has the desired smooth sequence structure. The reason is as follows.

For $k>n$,  we see that the sequence $T$ meeting any one of Conditions (ii)-(v) in Lemma \ref{prop in cyclic semigroup} has length exactly $\left\lfloor\frac{(\left\lceil\frac{k}{n}\right\rceil+1) n}{2}\right\rfloor-1$ and satisfies $\mathop{\bullet}\limits_{a\mid T}{\rm ind}(a)$ is not $1$-smooth.
For $k\leq n$, one can check that the following idempotent-sum free sequence $V$ of length exactly $\left\lfloor\frac{n}{2}\right\rfloor$ and the sequence $\mathop{\bullet}\limits_{a\mid V} ({\rm ind}(a)+n\mathbb{Z})\in \mathcal{F}(\mathbb{Z}\diagup n \mathbb{Z})$ is not smooth:
$$\mathop{\bullet}\limits_{a\mid V}{\rm ind}(a)=\left\{ \begin{array}{ll}
1^{[\frac{n-5}{2}]}\cdot (\frac{n+3}{2})^{[2]}, & \textrm{if $n\geq  8$ and $n\equiv 1 \pmod 2$;}\\
1^{[\frac{n-4}{2}]}\cdot (\frac{n+2}{2})^{[2]}, & \textrm{if $n\geq  8$ and $n\equiv 0 \pmod 2$.}\\
\end{array} \right.$$
\end{remark}

\section{Concluding section}

Theorem \ref{Theorem behaving sequence in semigroup} asserts that
if an idempotent-sum free sequence $T$ over a cyclic semigroup ${\rm C}_{k;  n}$ has the length over `approximately' a half of the size of the cyclic semigroup, then $T$ will have a smooth sequence structure. Although the quantities in \eqref{equation length for T} are best possible {\sl in general}, it can be better for specific $k$ and $n$. So, one natural Ramsey-type question arises:
For particular $k$ and $n$,  what is the smallest positive integer $\ell$ such that every idempotent-sum free sequence $T$ over ${\rm C}_{k;  n}$ of length $|T|\geq \ell$ will yield a smooth sequence structure given as Theorem \ref{Theorem behaving sequence in semigroup}?
This type of question has been investigated by S.T. Chapman, M. Freeze and W.W. Smith \cite{ChFrSmi,ChSmi}, W. Gao \cite{GaoInteger}, and P. Yuan \cite{Yuan} for minimal zero-sum sequences over
finite cyclic groups, which is formulated as the  invariant $\mathsf{I}(\cdot)$ below.

\noindent {\bf Definition A.} \ (\cite{GRuzsa}, Definition 5.1.1 and Lemma 5.1.2) {\sl
Let $G$ be a cyclic group of order $n$.

(i) For any nonzero element $g\in G$ and for any sequence $T=(n_1g)\cdot\ldots\cdot (n_{\ell} g)$, where $\ell\in \mathbb{N}\cup \{0\}$ and $n_1,\ldots,n_{\ell}\in [1, {\rm ord}(g)]$, we define $\|T\|_g=\frac{n_1+\cdots+n_{\ell}}{{\rm ord}(g)};$

(ii) For any $T\in \mathcal{F}(G)$, we call ${\rm ind}(T)=\min \{\|T\|_g: g\in G \mbox{ with } {\rm ord}(g)=n\}\in \mathbb{Q}_{\geq 0}$ the {\rm index} of $T$;

(iii) Define $\mathsf{I}(G)$ to be the smallest integer $\ell\in \mathbb{N}$ such that every minimal zero-sum sequence $T\in \mathcal{F}(G)$ of length $|T|\geq \ell$ satisfies ${\rm ind}(T)=1$.}

The invariant $\mathsf{I}(\cdot)$ was completely determined by P. Yuan in a final critical step.

\noindent {\bf Theorem B.} (see \cite{Yuan}, or [\cite{GRuzsa}, Corollary 5.1.9]) \  Let $G$ be a cyclic group of order $n \geq 1$. If $n\in\{1,2,3,4,5,7\}$ then
$\mathsf{I}(G)=1$, and otherwise we have $\mathsf{I}(G)=\lfloor\frac{n}{2}\rfloor+2$.

Now we
formulate two invariants on the Ramsey-type question associated with idempotent-sum free and minimal idempotent-sum sequences over finite cyclic semigroups.

\begin{definition}\label{definition minimal idempotent-sum} \ For $\max(k,n)>1$ we define ${\rm Smo}({\rm C}_{k;  n})$  \ [resp. ${\rm \widehat{Smo}}({\rm C}_{k;  n})$] \ to be the least positive integer $\ell$ such that for
any minimal idempotent-sum \ [resp. idempotent-sum free] \  sequence $T\in \mathcal{F}({\rm C}_{k;  n})$ of length at least $\ell$ satisfies:

(i) If $k>n$ then the sequence $\mathop{\bullet}\limits_{a\mid T} {\rm ind}(a)\in \mathcal{F}(\mathbb{Z})$ is $1$-smooth;

(ii) If $k\leq n$ then the sequence $\mathop{\bullet}\limits_{a\mid T} ({\rm ind}(a)+n\mathbb{Z}) \in \mathcal{F}(\mathbb{Z}\diagup n \mathbb{Z})$ is zero-sum $g$-smooth [resp. $g$-smooth] for some $g\in  \mathbb{Z}\diagup n \mathbb{Z}$ with ${\rm ord}(g)=n$.
\end{definition}

We let ${\rm Smo}({\rm C}_{1;  1})=1$ and ${\rm \widehat{Smo}}({\rm C}_{1;  1})=0$. The following Lemma will illustrate us why the invariant ${\rm Smo}({\rm C}_{k;  n})$ coincides with $\mathsf{I}(\mathbb{Z}\diagup n \mathbb{Z})$ for the case of $k\leq n$ with $n=6$ or $n\geq 8$.

\begin{lemma}\label{Lemma relation index and smooth} \ For $1\leq k\leq n$, let $T\in \mathcal{F}({\rm C}_{k;  n})$ be a minimal idempotent-sum sequence. Then,

(i) If $\mathop{\bullet}\limits_{a\mid T} ({\rm ind}(a)+n\mathbb{Z})\in \mathcal{F}(\mathbb{Z}\diagup n \mathbb{Z})$ is zero-sum smooth then ${\rm ind}(\mathop{\bullet}\limits_{a\mid T} ({\rm ind}(a)+n\mathbb{Z}))=1$;

(ii) If $|T|>\frac{n}{2}$ and ${\rm ind}(\mathop{\bullet}\limits_{a\mid T} ({\rm ind}(a)+n\mathbb{Z}))=1$ then  $\mathop{\bullet}\limits_{a\mid T} ({\rm ind}(a)+n\mathbb{Z})$ is zero-sum smooth;

(iii) ${\rm Smo}({\rm C}_{k;  n})\geq \mathsf{I}(\mathbb{Z}\diagup n \mathbb{Z})$, moreover, if $n=6$ or $n\geq 8$ then ${\rm Smo}({\rm C}_{k;  n})=\mathsf{I}(\mathbb{Z}\diagup n \mathbb{Z})=\lfloor\frac{n}{2}\rfloor+2$.
\end{lemma}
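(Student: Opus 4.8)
The plan is to reduce the entire statement to the group $\mathbb{Z}\diagup n\mathbb{Z}$ by means of Lemma \ref{lemma case of k leq n}, which (for $k\leq n$) identifies the minimal idempotent-sum sequences $T$ over ${\rm C}_{k;n}$ with the minimal zero-sum sequences $\Psi(T)$ over $\mathbb{Z}\diagup n\mathbb{Z}$ of the same length, and then to play off the explicit shape of a zero-sum smooth sequence (Definition \ref{Definition behaving sequence}) against the definition of the index (Definition A). Throughout, write $S:=\Psi(T)=\mathop{\bullet}\limits_{a\mid T}({\rm ind}(a)+n\mathbb{Z})$, a minimal zero-sum sequence over $\mathbb{Z}\diagup n\mathbb{Z}$. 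For part (i) the argument is immediate: for any generator $g$, writing $S=(n_1 g)\cdot\ldots\cdot(n_\ell g)$ with $n_i\in[1,n]$, the zero-sum condition forces $\sum n_i\equiv 0\pmod n$ with $\sum n_i>0$, hence $\sum n_i\geq n$ and $\|S\|_g\geq 1$; so ${\rm ind}(S)\geq 1$ always. If $S$ is zero-sum $g$-smooth then Definition \ref{Definition behaving sequence} gives $\sum n_i=n={\rm ord}(g)$, whence $\|S\|_g=1$ and therefore ${\rm ind}(S)=1$.

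For part (ii) I would argue as follows. Assume ${\rm ind}(S)=1$ and set $\ell:=|S|=|T|>\frac{n}{2}$. Choose a generator $g$ attaining the index, so that $S=(n_1 g)\cdot\ldots\cdot(n_\ell g)$ with $\sum_{i=1}^{\ell}n_i=n$. Since $\ell>\frac{n}{2}$ we have $\sum n_i=n<2\ell$, so applying Lemma \ref{Lemma behaving for integers} to the integer sequence $\mathop{\bullet}\limits_{i\in[1,\ell]}n_i$ shows that this sequence cannot fail to be $1$-smooth, since otherwise its sum would be at least $2\ell$. Hence its set of subsequence sums equals $[1,n]$; in particular its smallest term is $1$, matching the requirement $1=n_1\leq\cdots\leq n_\ell$ in Definition \ref{Definition behaving sequence}. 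Translating back to $\mathbb{Z}\diagup n\mathbb{Z}$ yields $\Sigma(S)=\{g,2g,\ldots,ng\}$, and together with $\sum n_i=n={\rm ord}(g)$ this is exactly the statement that $S$ is zero-sum $g$-smooth. The hypothesis $|T|>\frac{n}{2}$ enters precisely to push $\sum n_i=n$ below the threshold $2\ell$ of Lemma \ref{Lemma behaving for integers}.

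For part (iii) I would first establish the lower bound using the surjectivity of $\psi$: the indices of the elements of ${\rm C}_{k;n}$ run through $[1,k+n-1]$, and since $k+n-1\geq n$ every residue modulo $n$ is attained, so $\Psi$ is surjective onto $\mathcal{F}(\mathbb{Z}\diagup n\mathbb{Z})$. Given any minimal zero-sum sequence $S$ over $\mathbb{Z}\diagup n\mathbb{Z}$ of length at least ${\rm Smo}({\rm C}_{k;n})$, lift it to $T$ with $\Psi(T)=S$; by Lemma \ref{lemma case of k leq n} this $T$ is minimal idempotent-sum of the same length, so by the definition of ${\rm Smo}$ the sequence $S$ is zero-sum smooth, and part (i) gives ${\rm ind}(S)=1$. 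Thus every minimal zero-sum sequence of length at least ${\rm Smo}({\rm C}_{k;n})$ has index $1$, which yields $\mathsf{I}(\mathbb{Z}\diagup n\mathbb{Z})\leq {\rm Smo}({\rm C}_{k;n})$ (the case $n=1$ being trivial). For the reverse inequality when $n=6$ or $n\geq 8$, Theorem B gives $\mathsf{I}(\mathbb{Z}\diagup n\mathbb{Z})=\lfloor\frac{n}{2}\rfloor+2$; any minimal idempotent-sum $T$ with $|T|\geq\lfloor\frac{n}{2}\rfloor+2$ produces a minimal zero-sum $S=\Psi(T)$ of length at least $\mathsf{I}(\mathbb{Z}\diagup n\mathbb{Z})$, so ${\rm ind}(S)=1$, and since $|T|\geq\lfloor\frac{n}{2}\rfloor+2>\frac{n}{2}$ part (ii) makes $S$ zero-sum smooth. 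This shows ${\rm Smo}({\rm C}_{k;n})\leq\lfloor\frac{n}{2}\rfloor+2$, and combining with the lower bound gives the asserted equality.

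I expect the only genuinely delicate step to be part (ii): one must verify both that an index-attaining generator has coefficient sum exactly $n$ and that the length bound $|T|>\frac{n}{2}$ is precisely what triggers Lemma \ref{Lemma behaving for integers}. The other parts reduce to bookkeeping once the dictionary of Lemma \ref{lemma case of k leq n} and the surjectivity of $\Psi$ are recorded; part (iii) is then a purely formal comparison of the two defining minimalities using (i), (ii) and the known value of $\mathsf{I}(\mathbb{Z}\diagup n\mathbb{Z})$ from Theorem B.
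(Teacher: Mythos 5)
Your proposal is correct and follows essentially the same route as the paper: part (i) from Definition \ref{Definition behaving sequence} and Definition A, part (ii) by taking an index-attaining generator and invoking Lemma \ref{Lemma behaving for integers} via the bound $n<2|T|$, and part (iii) by combining (i), (ii), Theorem B, and the transfer given by Lemma \ref{lemma case of k leq n}. Your write-up merely makes explicit some steps the paper leaves implicit (notably the surjectivity of $\Psi$ used to lift minimal zero-sum sequences in the lower bound ${\rm Smo}({\rm C}_{k;n})\geq \mathsf{I}(\mathbb{Z}\diagup n\mathbb{Z})$), which is a faithful filling-in rather than a different argument.
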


\begin{proof} \  (i). The conclusion follows from Definition \ref{Definition behaving sequence} and Definition A.

(ii).  Since ${\rm ind}(\mathop{\bullet}\limits_{a\mid T} ({\rm ind}(a)+n\mathbb{Z}))=1$, it follows from Definition A that  $\|\mathop{\bullet}\limits_{a\mid T} ({\rm ind}(a)+n\mathbb{Z})\|_g=1$ for some $g\in \mathbb{Z}\diagup n \mathbb{Z}$
with ${\rm ord}(g)=n$, i.e.,  $\mathop{\bullet}\limits_{a\mid T} ({\rm ind}(a)+n\mathbb{Z})
=(n_1g)\cdot\ldots\cdot (n_{t} g)$ where $t=|T|$, $n_1,\ldots,n_{t}\in [1, n]$ and $\sum\limits_{i=1}^{t} n_i=n$. Since $t>\frac{n}{2}$, it follows from Lemma \ref{Lemma behaving for integers} that $(\mathop{\bullet}\limits_{i\in [1,t]} n_i)\in \mathcal{F}(\mathbb{Z})$ is $1$-smooth
and $\sum(\mathop{\bullet}\limits_{i\in [1,t]} n_i)=[1,n]$,
which implies $\sum(\mathop{\bullet}\limits_{a\mid T} ({\rm ind}(a)+n\mathbb{Z}))=\sum((n_1g)\cdot\ldots\cdot (n_{t} g))=\{g,2g,\ldots,ng\}$.
Hence, the sequence $\mathop{\bullet}\limits_{a\mid T} ({\rm ind}(a)+n\mathbb{Z})\in \mathcal{F}(\mathbb{Z}\diagup n \mathbb{Z})$ is zero-sum $g$-smooth.

(iii). The conclusion ${\rm Smo}({\rm C}_{k;  n})\geq \mathsf{I}(\mathbb{Z}\diagup n \mathbb{Z})$ follows from (i). Say $n=6$ or $n\geq 8$. By Theorem B, $\mathsf{I}(\mathbb{Z}\diagup n \mathbb{Z})=\lfloor\frac{n}{2}\rfloor+2>\frac{n}{2}$. Then ${\rm Smo}({\rm C}_{k;  n})=\mathsf{I}(\mathbb{Z}\diagup n \mathbb{Z})$ follows from (ii).
\end{proof}

Together with the following observation, we shall have all ingredients to find the values of ${\rm \widehat{Smo}}({\rm C}_{k;  n})$ and ${\rm Smo}({\rm C}_{k;  n})$ for cyclic semigroups ${\rm C}_{k;  n}$.

\begin{lemma} \label{Lemma last lemma}\  Let $H$ be a sequence of positive integers of length at least $2$, and let $h$ be one minimal term of $H$. If $H\cdot h^{[-1]}$ is $1$-smooth, so is $H$.
\end{lemma}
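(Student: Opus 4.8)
The plan is to reduce everything to the characterization of $1$-smoothness recorded in the remark following Definition \ref{Definition behaving sequence}: a nonempty sequence $T\in\mathcal{F}(\mathbb{Z})$ of positive integers is $1$-smooth exactly when $\Sigma(T)=[1,\sigma(T)]$, i.e. when its set of subsequence sums fills the whole interval from $1$ to its total sum. So I would write $H'=H\cdot h^{[-1]}$, note that $|H'|=|H|-1\ge 1$ (here the hypothesis $|H|\ge 2$ is used), so that $H'$ is nonempty and $\sigma(H')\ge 1$, and then aim to show $\Sigma(H)=[1,\sigma(H)]$.

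The first substantive step is to identify the removed term. Since $H'$ is $1$-smooth we have $1\in\Sigma(H')$, which forces the smallest term of $H'$ to be $1$. On the other hand $h$ is a minimal term of $H$, so $h\le a$ for every term $a$ of $H$, and in particular $h\le 1$; as $h$ is a positive integer this gives $h=1$. Hence $H=H'\cdot 1^{[1]}$ is obtained from the $1$-smooth sequence $H'$ by appending a single term equal to $1$, and $\sigma(H)=\sigma(H')+1$.

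The second step is a direct computation of $\Sigma(H)$, partitioning the nonempty subsequences of $H$ according to whether or not they contain the appended term. Those that do not are precisely the nonempty subsequences of $H'$ and contribute $\Sigma(H')=[1,\sigma(H')]$; those that do contribute $\{1\}\cup(\Sigma(H')+1)=\{1\}\cup[2,\sigma(H')+1]$, the isolated $1$ arising from the subsequence consisting of the appended term alone. Taking the union gives $\Sigma(H)=[1,\sigma(H')]\cup[1,\sigma(H')+1]=[1,\sigma(H')+1]=[1,\sigma(H)]$, and by the cited characterization this says exactly that $H$ is $1$-smooth.

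I do not expect a genuine obstacle here; the argument is short and elementary. The one conceptual point is the deduction $h=1$, which is where both hypotheses (minimality of $h$ and $1$-smoothness of $H'$) are actually consumed. The only place demanding a little care is the interval bookkeeping in the last step: one must confirm that $[1,\sigma(H')]$ and $[2,\sigma(H')+1]$ overlap so that their union is a single interval, which is guaranteed by $\sigma(H')\ge 1$.
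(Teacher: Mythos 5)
Your proof is correct. The paper states this lemma as a bare observation and supplies no proof of its own, so there is nothing to compare against; your argument is the natural filling-in of the omitted reasoning. The two key points are both sound: the deduction $h=1$ (since $1\in\Sigma(H\cdot h^{[-1]})$ forces some term of $H\cdot h^{[-1]}$ to equal $1$, and minimality of $h$ in $H$ then gives $h\le 1$), and the sum-set identity $\Sigma(H'\cdot 1)=\Sigma(H')\cup\{1\}\cup\bigl(1+\Sigma(H')\bigr)=[1,\sigma(H')+1]$, which holds rigorously even when $H'$ already contains terms equal to $1$, because any nonempty subsequence $S$ of $H'\cdot 1$ with ${\rm v}_1(S)\ge 1$ satisfies $S\cdot 1^{[-1]}\mid H'$.
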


\begin{theorem}\label{propositive k leq n} \ Let $k,n$ be positive integers. Then the following conclusions hold:

(i) \ For $k\leq n$, if $n=5$ then ${\rm Smo}({\rm C}_{k;  n})=3$ and ${\rm \widehat{Smo}}({\rm C}_{k;  n})=1$, and otherwise we have
$${\rm Smo}({\rm C}_{k;  n})-1={\rm \widehat{Smo}}({\rm C}_{k;  n})=\left\{ \begin{array}{ll}
\lfloor\frac{n}{2}\rfloor, & \textrm{if $n\leq 4$ or $n=7$;}\\
\lfloor\frac{n}{2}\rfloor+1, & \textrm{if $n=6$ or $n\geq 8$.}\\
\end{array} \right.$$

(ii) \ For $k>n$, then
${\rm Smo}({\rm C}_{k;  n})\leq {\rm \widehat{Smo}}({\rm C}_{k;  n})+1$, moreover,
$$\left\{ \begin{array}{ll}
\frac{\left\lceil\frac{k}{n}\right\rceil n}{2}+1\leq {\rm \widehat{Smo}}({\rm C}_{k;  n})\leq \left\lceil\frac{(\left\lceil\frac{k}{n}\right\rceil+1) n}{2}\right\rceil-1, & \textrm{if $n\geq 3$ and $\left\lceil\frac{k}{n}\right\rceil n\equiv 0\pmod 2$;}\\
{\rm \widehat{Smo}}({\rm C}_{k;  n})=\left\lfloor\frac{(\left\lceil\frac{k}{n}\right\rceil+1) n}{2}\right\rfloor, & \textrm{otherwise,}\\
\end{array} \right.$$
and
$$\left\{ \begin{array}{ll}
\frac{\left\lceil\frac{k}{n}\right\rceil n}{2}+1\leq {\rm Smo}({\rm C}_{k;  n})\leq \left\lceil\frac{(\left\lceil\frac{k}{n}\right\rceil+1) n}{2}\right\rceil, & \textrm{if $n\geq 3$ and $\left\lceil\frac{k}{n}\right\rceil n\equiv 0\pmod 2$;}\\
\\
{\rm Smo}({\rm C}_{k;  n})=\left\lfloor\frac{(\left\lceil\frac{k}{n}\right\rceil+1) n}{2}\right\rfloor, & \textrm{if $n=2$;}\\
\\
{\rm Smo}({\rm C}_{k;  n})=\left\lfloor\frac{(\left\lceil\frac{k}{n}\right\rceil+1) n}{2}\right\rfloor+1, & \textrm{otherwise.}\\
\end{array} \right.$$
\end{theorem}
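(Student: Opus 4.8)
The plan is to treat the two regimes $k\le n$ and $k>n$ separately, since Lemma~\ref{lemma case of k leq n} collapses the former to a purely group-theoretic question over $\mathbb{Z}\diagup n\mathbb{Z}$, while the latter is governed by the integer structure furnished by Lemma~\ref{prop in cyclic semigroup}. Throughout I write $m=\lceil\frac{k}{n}\rceil$, so that the idempotent threshold of Lemma~\ref{Lemma product condition containing idmepotent} is $mn$.

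For part (i) the map $\Psi$ identifies, via Lemma~\ref{lemma case of k leq n}, idempotent-sum free [resp.\ minimal idempotent-sum] sequences with zero-sum free [resp.\ minimal zero-sum] sequences over $\mathbb{Z}\diagup n\mathbb{Z}$, so ${\rm \widehat{Smo}}({\rm C}_{k;n})$ and ${\rm Smo}({\rm C}_{k;n})$ become the least lengths forcing $g$-smoothness and zero-sum $g$-smoothness in the group. I would first pin down ${\rm \widehat{Smo}}$: the bound $\le\lfloor\frac n2\rfloor+1$ is Lemma~\ref{SachenChen}, and for $n=6$ or $n\ge8$ the matching lower bound comes from the non-smooth zero-sum free sequences of length $\lfloor\frac n2\rfloor$ exhibited in the Remark. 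Then I read off ${\rm Smo}$ for $n=6$ or $n\ge8$ directly from Lemma~\ref{Lemma relation index and smooth}(iii) and Theorem~B, obtaining ${\rm Smo}=\mathsf{I}(\mathbb{Z}\diagup n\mathbb{Z})=\lfloor\frac n2\rfloor+2$. The remaining values ($n\le5$ and $n=7$) I would settle by a finite inspection of the short zero-sum free and minimal zero-sum sequences over $\mathbb{Z}\diagup n\mathbb{Z}$; this inspection is also what isolates the anomaly at $n=5$, where every nonempty zero-sum free sequence happens to be smooth (so ${\rm \widehat{Smo}}=1$) yet a short minimal zero-sum sequence of length $2$ over $\mathbb{Z}\diagup5\mathbb{Z}$ (for instance the one with terms $2$ and $3$) is not zero-sum smooth (so ${\rm Smo}=3$). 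The asserted identity ${\rm Smo}-1={\rm \widehat{Smo}}$ for $n\neq5$ then drops out of the tabulated values.

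For part (ii) the structural engine is Lemma~\ref{prop in cyclic semigroup}: every idempotent-sum free sequence of length at least $\frac{(m+1)n}{2}-1$ is either $1$-smooth or has one of the exceptional index shapes (ii)--(v), each of length exactly $\frac{(m+1)n}{2}-1$. I would first prove the uniform inequality ${\rm Smo}\le{\rm \widehat{Smo}}+1$: deleting a minimal term from a minimal idempotent-sum sequence leaves an idempotent-sum free sequence (every proper subsequence of a minimal idempotent-sum sequence is idempotent-sum free), so once its length reaches ${\rm \widehat{Smo}}$ it is $1$-smooth, and Lemma~\ref{Lemma last lemma} lifts this to $1$-smoothness of the whole sequence. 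The upper bounds for ${\rm \widehat{Smo}}$ then come straight from Lemma~\ref{prop in cyclic semigroup}: when $n\ge3$ and $mn$ is even none of (ii)--(v) applies, so all long enough sequences are $1$-smooth and ${\rm \widehat{Smo}}\le\lceil\frac{(m+1)n}{2}\rceil-1$; when $mn$ is odd or $n\le2$ the exceptional sequences realize length $\frac{(m+1)n}{2}-1$ and, being non-smooth, force ${\rm \widehat{Smo}}=\lfloor\frac{(m+1)n}{2}\rfloor$ (for $n=1$ with $k$ even, where the lemma provides no exceptional shape, the witness $2^{[\frac k2-1]}$ plays the same role). For the lower bounds I would produce explicit witnesses and verify them through Lemma~\ref{Lemma product condition containing idmepotent} and Lemma~\ref{Lemma behaving for integers}: the index multiset $3\cdot2^{[\frac{mn}{2}-1]}$ is idempotent-sum free and not $1$-smooth (its subsequence sums miss every multiple of $n$ that is at least $mn$, and it omits the value $1$), giving ${\rm \widehat{Smo}}\ge\frac{mn}{2}+1$; and for ${\rm Smo}$ the power $2^{[L]}$, with $L$ the least integer satisfying $2L\ge mn$ and $n\mid 2L$, is a non-smooth minimal idempotent-sum sequence whose length $L$ equals, in each case, the claimed value of ${\rm Smo}$ minus one.

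The main obstacle is the exact value ${\rm Smo}({\rm C}_{k;n})=m+1$ at $n=2$, where the clean inequality ${\rm Smo}\le{\rm \widehat{Smo}}+1$ overshoots by one. Since ${\rm \widehat{Smo}}=m+1$ here, term-deletion already disposes of lengths $\ge m+2$, leaving only minimal idempotent-sum sequences of length exactly $m+1$. For such a sequence I would delete a minimal term to get an idempotent-sum free sequence of length $m$, invoke Lemma~\ref{prop in cyclic semigroup} (whose threshold at $n=2$ is precisely $m$) to see that it is either $1$-smooth---whence Lemma~\ref{Lemma last lemma} finishes---or of the exceptional form $z\cdot2^{[m-1]}$ with $z\ge3$ odd; in the latter case a short parity count on the total index-sum shows that restoring the minimal term either yields a $1$-smooth sequence or destroys minimality, so this branch never occurs among minimal idempotent-sum sequences of length $m+1$. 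This critical-length analysis at $n=2$, together with the finite small-$n$ verifications in part (i), is where essentially all the non-routine effort is concentrated.
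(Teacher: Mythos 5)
Your proposal is correct and follows essentially the same route as the paper: the same reduction of $k\le n$ to the group case via Lemma~\ref{lemma case of k leq n} with small $n$ settled by inspection and large $n$ by Lemma~\ref{Lemma relation index and smooth}, the same witnesses $3\cdot 2^{[\frac{mn}{2}-1]}$ and $2^{[L]}$ for the lower bounds, the same deletion-of-a-minimal-term argument with Lemma~\ref{Lemma last lemma} for ${\rm Smo}\le{\rm \widehat{Smo}}+1$, and the same endgame at $n=2$ (delete a minimal term, apply Lemma~\ref{prop in cyclic semigroup}, and rule out the exceptional shape $z\cdot 2^{[m-1]}$ by a parity count contradicting minimality). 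The only cosmetic difference is that for ${\rm Smo}$ at $n\in\{2,3,4,5,7\}$ the paper uses Theorem~B together with Lemma~\ref{Lemma relation index and smooth}(ii) rather than pure finite inspection, but this is not a substantively different argument.
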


\begin{proof} (i).  By the definition, ${\rm \widehat{Smo}}({\rm C}_{1;  1})=0$ and ${\rm Smo}({\rm C}_{1;  1})=1$. Say $n\geq 2$. To calculate ${\rm Smo}({\rm C}_{k;  n})$, by Lemma \ref{Lemma relation index and smooth} (iii), it remains to consider the case of $n\in \{2,3,4,5,7\}$. Take a sequence $W\in \mathcal{F}({\rm C}_{k;  n})$, where
$$\mathop{\bullet}\limits_{a\mid W}{\rm ind}(a)=\left\{ \begin{array}{ll}
n, & \textrm{if $n\in \{2,3\}$;}\\
1\cdot (n-1), & \textrm{if $n\in \{4,5\}$;}\\
1\cdot 1\cdot 5, & \textrm{if $n=7$.}\\
\end{array} \right.$$ It is routine to check that $W$ is a minimal idempotent-sum sequence and the sequence $\mathop{\bullet}\limits_{a\mid W} ({\rm ind}(a)+n\mathbb{Z})\in \mathcal{F}(\mathbb{Z}\diagup n \mathbb{Z})$ is not zero-sum $g$-smooth for any $g\in  \mathbb{Z}\diagup n \mathbb{Z}$ with ${\rm ord}(g)=n$, which implies
\begin{equation}\label{equation smo geq|W|+1}
{\rm Smo}({\rm C}_{k;  n})\geq |W|+1=
\lfloor\frac{n}{2}\rfloor+1  \ \mbox{ for } \ n\in \{2,3,4,5,7\}.
\end{equation}
On the other hand, let $T\in \mathcal{F}({\rm C}_{k;  n})$ be a minimal idempotent-sum sequence such that
\begin{equation}\label{equation |T|>n/2}
|T|\geq \lfloor\frac{n}{2}\rfloor+1.
\end{equation}
 By Lemma \ref{lemma case of k leq n} and Theorem B, see that $\mathop{\bullet}\limits_{a\mid T} ({\rm ind}(a)+n\mathbb{Z})\in \mathcal{F}(\mathbb{Z}\diagup n \mathbb{Z})$ is a minimal zero-sum sequence with ${\rm ind}(\mathop{\bullet}\limits_{a\mid T} ({\rm ind}(a)+n\mathbb{Z}))=1$. By \eqref{equation |T|>n/2} and Lemma \ref{Lemma relation index and smooth} (ii), we derive that $\mathop{\bullet}\limits_{a\mid T} ({\rm ind}(a)+n\mathbb{Z})$ is zero-sum smooth and so ${\rm Smo}({\rm C}_{k;  n})\leq \lfloor\frac{n}{2}\rfloor+1$, combined with \eqref{equation smo geq|W|+1}, we have that ${\rm Smo}({\rm C}_{k;  n})=
\lfloor\frac{n}{2}\rfloor+1  \ \mbox{ where } \ n\in \{2,3,4,5,7\}$.

Next we figure out the value of ${\rm \widehat{Smo}}({\rm C}_{k;  n})$.
If $n\leq 5$ or $n=7$, the conclusion follows by exhaustive but trivial calculations. If $n=6$ or $n\geq 8$, we see that the following idempotent-sum free sequence $V\in \mathcal{F}({\rm C}_{k;  n})$ has length exactly $\lfloor\frac{n}{2}\rfloor$ and  $\mathop{\bullet}\limits_{a\mid V} ({\rm ind}(a)+n\mathbb{Z})\in \mathcal{F}(\mathbb{Z}\diagup n \mathbb{Z})$ is not $g$-smooth for any $g\in  \mathbb{Z}\diagup n \mathbb{Z}$ with ${\rm ord}(g)=n$, which implies that ${\rm \widehat{Smo}}({\rm C}_{k;  n})\geq |V|+1=\lfloor\frac{n}{2}\rfloor+1$, where $$\mathop{\bullet}\limits_{a\mid V}{\rm ind}(a)=\left\{ \begin{array}{ll}
1^{[\frac{n-5}{2}]}\cdot (\frac{n+3}{2})^{[2]}, & \textrm{if $n\geq  8$ and $n\equiv 1 \pmod 2$;}\\
1^{[\frac{n-4}{2}]}\cdot (\frac{n+2}{2})^{[2]}, & \textrm{otherwise.}\\
\end{array} \right.$$
On the other hand,  Theorem \ref{Theorem behaving sequence in semigroup} tells us that
${\rm \widehat{Smo}}({\rm C}_{k;  n})\leq \left\lfloor\frac{n}{2}\right\rfloor+1,$ and so ${\rm \widehat{Smo}}({\rm C}_{k;  n})=\left\lfloor\frac{n}{2}\right\rfloor+1$ when $n=6$ or $n\geq 8$,
completing the calculations of ${\rm \widehat{Smo}}({\rm C}_{k;  n})$.

\noindent (ii). We first calculate ${\rm \widehat{Smo}}({\rm C}_{k;  n})$. Take a sequence $V\in \mathcal{F}({\rm C}_{k;  n})$
such that
$$\mathop{\bullet}\limits_{a\mid V}{\rm ind}(a)=\left\{ \begin{array}{ll}
3\cdot 2^{[\frac{\left\lceil\frac{k}{n}\right\rceil n}{2}-1]}, & \textrm{if $n\geq 2$ and $\left\lceil\frac{k}{n}\right\rceil n\equiv 0\pmod 2$;}\\
2^{[\left\lfloor\frac{(\left\lceil\frac{k}{n}\right\rceil+1) n}{2}\right\rfloor-1]}, & \textrm{otherwise.}\\
\end{array} \right.$$
By Lemma \ref{Lemma product condition containing idmepotent}, we can check that $V$ is an idempotent-sum free sequence. Since $\mathop{\bullet}\limits_{a\mid V}{\rm ind}(a)$ is not $1$-smooth,
noting $\frac{\left\lceil\frac{k}{n}\right\rceil n}{2}+1=\left\lfloor\frac{(\left\lceil\frac{k}{n}\right\rceil+1) n}{2}\right\rfloor$ for $n=2$,
we conclude that
\begin{equation}\label{equation instant equation}
{\rm \widehat{Smo}}({\rm C}_{k;  n})\geq |V|+1=\left\{ \begin{array}{ll}
\frac{\left\lceil\frac{k}{n}\right\rceil n}{2}+1, & \textrm{if $n\geq 3$ and $\left\lceil\frac{k}{n}\right\rceil n\equiv 0\pmod 2$;}\\
\left\lfloor\frac{(\left\lceil\frac{k}{n}\right\rceil+1) n}{2}\right\rfloor, & \textrm{otherwise.}\\
\end{array} \right.
\end{equation}
By
Theorem \ref{Theorem behaving sequence in semigroup}, we derive that ${\rm \widehat{Smo}}({\rm C}_{k;  n})\leq \left\lfloor\frac{(\left\lceil\frac{k}{n}\right\rceil+1) n}{2}\right\rfloor$. Furthermore, if $n\geq 3$ and $\left\lceil\frac{k}{n}\right\rceil n\equiv 0\pmod 2$, we conclude from Lemma \ref{prop in cyclic semigroup} that every idempotent-sum free sequence $U\in \mathcal{F}({\rm C}_{k;  n})$
of length at least $\frac{(\left\lceil\frac{k}{n}\right\rceil+1) n}{2}-1$ yields that $\mathop{\bullet}\limits_{a\mid U} {\rm ind}(a)$ is $1$-smooth, i.e., ${\rm \widehat{Smo}}({\rm C}_{k;  n})\leq \left\lceil\frac{(\left\lceil\frac{k}{n}\right\rceil+1) n}{2}-1\right\rceil=\left\lceil\frac{(\left\lceil\frac{k}{n}\right\rceil+1) n}{2}\right\rceil-1$.
Combined with \eqref{equation instant equation}, we complete the calculations of  ${\rm \widehat{Smo}}({\rm C}_{k;  n})$.

To establish ${\rm Smo}({\rm C}_{k;  n})\leq {\rm \widehat{Smo}}({\rm C}_{k;  n})+1$,
let $T\in \mathcal{F}({\rm C}_{k;  n})$ be a minimal idempotent-sum sequence of length at least ${\rm \widehat{Smo}}({\rm C}_{k;  n})+1$. It suffices to show that $\mathop{\bullet}\limits_{a\mid T} {\rm ind}(a)$ is $1$-smooth. Take a term $b$ of $T$ with ${\rm ind}(b)$ minimal.
Since $T\cdot b^{[-1]}$ is an idempotent-sum free sequence of length $|T\cdot b^{[-1]}|\geq {\rm \widehat{Smo}}({\rm C}_{k;  n})$, it follows that $\mathop{\bullet}\limits_{a\mid T\cdot b^{[-1]}} {\rm ind}(a)$ is $1$-smooth. Then the conclusion follows from Lemma \ref{Lemma last lemma}  immediately.

Now we calculate ${\rm Smo}({\rm C}_{k;  n})$. Take a sequence $W\in \mathcal{F}({\rm C}_{k;  n})$ with $\mathop{\bullet}\limits_{a\mid W}{\rm ind}(a)=2^{[\ell]}$ and
$$\ell=\left\{ \begin{array}{ll}
\frac{\left\lceil\frac{k}{n}\right\rceil n}{2}, & \textrm{if $\left\lceil\frac{k}{n}\right\rceil n\equiv 0\pmod 2$;}\\
\frac{(\left\lceil\frac{k}{n}\right\rceil+1) n}{2}, & \textrm{otherwise.}\\
\end{array} \right.$$
By Lemma \ref{Lemma product condition containing idmepotent}, we see that $W$ is a minimal idempotent-sum sequence. Since $\mathop{\bullet}\limits_{a\mid W}{\rm ind}(a)$ is not $1$-smooth, we have
$${\rm Smo}({\rm C}_{k;  n})\geq |W|+1=\left\{ \begin{array}{ll}
\frac{\left\lceil\frac{k}{n}\right\rceil n}{2}+1, & \textrm{if $\left\lceil\frac{k}{n}\right\rceil n\equiv 0\pmod 2$;}\\
\frac{(\left\lceil\frac{k}{n}\right\rceil+1) n}{2}+1, & \textrm{otherwise.}\\
\end{array} \right.$$
Noting that if $n=1$ and $k=\left\lceil\frac{k}{n}\right\rceil n\equiv 0\pmod 2$ then $\frac{\left\lceil\frac{k}{n}\right\rceil n}{2}+1=\left\lfloor\frac{(\left\lceil\frac{k}{n}\right\rceil+1) n}{2}\right\rfloor+1$ and that if $n=2$ then $\frac{\left\lceil\frac{k}{n}\right\rceil n}{2}+1=\left\lfloor\frac{(\left\lceil\frac{k}{n}\right\rceil+1) n}{2}\right\rfloor$, combined with the obtained inequality ${\rm Smo}({\rm C}_{k;  n})\leq {\rm \widehat{Smo}}({\rm C}_{k;  n})+1$ and the result for ${\rm \widehat{Smo}}({\rm C}_{k;  n})$,
we conclude that $$\left\{ \begin{array}{ll}
\frac{\left\lceil\frac{k}{n}\right\rceil n}{2}+1\leq {\rm Smo}({\rm C}_{k;  n})\leq \left\lceil\frac{(\left\lceil\frac{k}{n}\right\rceil+1) n}{2}\right\rceil, & \textrm{if $n\geq 3$ and $\left\lceil\frac{k}{n}\right\rceil n\equiv 0\pmod 2$;}\\
\left\lfloor\frac{(\left\lceil\frac{k}{n}\right\rceil+1) n}{2}\right\rfloor\leq {\rm Smo}({\rm C}_{k;  n})\leq \left\lfloor\frac{(\left\lceil\frac{k}{n}\right\rceil+1) n}{2}\right\rfloor+1, & \textrm{if $n=2$;}\\
{\rm Smo}({\rm C}_{k;  n})=\left\lfloor\frac{(\left\lceil\frac{k}{n}\right\rceil+1) n}{2}\right\rfloor+1, & \textrm{otherwise.}\\
\end{array} \right.$$
To complete the proof, it remains to show that  ${\rm Smo}({\rm C}_{k;  n})\leq \left\lfloor\frac{(\left\lceil\frac{k}{n}\right\rceil+1) n}{2}\right\rfloor$ when $n=2$.
Assume to the contrary that ${\rm Smo}({\rm C}_{k;  n})> \left\lfloor\frac{(\left\lceil\frac{k}{n}\right\rceil+1) n}{2}\right\rfloor$ for $n=2$. Take a minimal idempotent-sum sequence $L\in \mathcal{F}({\rm C}_{k;  n})$ of length $|L|={\rm Smo}({\rm C}_{k;  n})-1\geq \left\lfloor\frac{(\left\lceil\frac{k}{n}\right\rceil+1) n}{2}\right\rfloor$
such that $\mathop{\bullet}\limits_{a\mid L}{\rm ind}(a)$ is not $1$-smooth.
Take a term $b$ of $L$ with ${\rm ind}(b)$ minimal. By Lemma \ref{Lemma last lemma}, the sequence $\mathop{\bullet}\limits_{a\mid L\cdot b^{[-1]}} {\rm ind}(a)$ is not $1$-smooth.
Since $L\cdot b^{[-1]}$ is an idempotent-sum free sequence of length $|L\cdot b^{[-1]}|\geq \left\lfloor\frac{(\left\lceil\frac{k}{n}\right\rceil+1) n}{2}\right\rfloor-1=\frac{(\left\lceil\frac{k}{n}\right\rceil+1) n}{2}-1$, it follows from
 Lemma \ref{prop in cyclic semigroup} that
$\mathop{\bullet}\limits_{a\mid L\cdot b^{[-1]}} {\rm ind}(a)=z\cdot 2^{\left[\lceil\frac{k}{2}\rceil-1\right]}$ with $z\geq 3$ and $z\equiv 1\pmod 2$. Since $\lceil\frac{k}{2}\rceil>1$, we can take a term $c$ of $L$ with ${\rm ind}(c)=2$. Combined with Lemma \ref{Lemma product condition containing idmepotent}, we verify that
$\sum\limits_{a\mid L\cdot c^{[-1]}}{\rm ind}(a)=2(\lceil\frac{k}{2}\rceil-2)+z+{\rm ind}(b)\geq 2(\lceil\frac{k}{2}\rceil-2)+3+1\geq k$ and $\sum\limits_{a\mid L\cdot c^{[-1]}}{\rm ind}(a)= \sum\limits_{a\mid L}{\rm ind}(a)-2\equiv\sum\limits_{a\mid L}{\rm ind}(a)\equiv 0\pmod 2$, and so $L\cdot c^{[-1]}$ is a nonempty {\sl proper} idempotent-sum subsequence of $L$, which contradicts with $L$ being a minimal idempotent-sum sequence, completing the proof.
\end{proof}

We close this paper with the following problem.

\begin{problem} \ Determine ${\rm \widehat{Smo}}({\rm C}_{k;  n})$ and ${\rm Smo}({\rm C}_{k;  n})$ when $k>n\geq 3$ and $\left\lceil\frac{k}{n}\right\rceil n\equiv 0\pmod 2$.
\end{problem}

\noindent {\bf Acknowledgements}

\noindent
The research is supported by NSFC (grant no. 11971347, 11501561).

\end{document}